\documentclass[12pt]{article}
\usepackage{amsfonts}
\usepackage{amsmath}
\usepackage{amssymb}
\usepackage{amsthm}
\usepackage{hyperref}

\hypersetup{
  colorlinks,
  urlcolor=black,
  linkcolor=black,
  anchorcolor=black,
  citecolor=black,
}

\title{The final size of the $C_4$-free process}

\author{Michael E. Picollelli\footnote{
\small Department of Electrical \& Computer Engineering, University of Delaware, Newark, DE, USA.  E-mail: \texttt{mpicolle@udel.edu}}}
\date{}

\pagestyle{plain} \setlength{\textwidth}{6.5in}
\setlength{\textheight}{8.7in}

\setlength{\topmargin}{0in} \setlength{\headheight}{0in}
\setlength{\headsep}{0in} \setlength{\oddsidemargin}{0in}
\setlength{\evensidemargin}{0in} \setlength{\marginparwidth}{1in}
\setlength{\marginparsep}{0in} 

\newcommand{\ep}{\varepsilon}
\newcommand{\bin}[2]{\binom{#1}{#2}}
\newcommand{\pr}[1]{\mbox{Pr}\left(#1\right)}
\newcommand{\ev}[1]{\mathbb{E}\left(#1\right)}
\newcommand{\lp}{\left(}
\newcommand{\rp}{\right)}
\newtheorem{theorem}{Theorem}

\newtheorem{lemma}{Lemma}
\newtheorem{corollary}{Corollary}
\newtheorem{claim}{Claim}

\newcommand{\ca}[1]{{\cal #1}}
\newcommand{\es}{\text{e}}

\begin{document}

\maketitle

\begin{abstract}
We consider the following random graph process: starting with $n$ isolated vertices, add edges uniformly at random provided no such edge creates a copy of $C_4$.  We show that, with probability tending to $1$ as $n \to \infty$, the final graph produced by this process has maximum degree $O((n \log n)^{1/3})$ and consequently size $O(n^{4/3}\log(n)^{1/3})$, which are sharp up to constants.  This confirms conjectures of Bohman and Keevash and of Osthus and Taraz, and improves upon previous bounds due to Bollob\'as and Riordan and Osthus and Taraz.
\end{abstract}

\section{Introduction}\label{sec:intro}
The $H$-free process, where $H$ is a fixed graph, is the random graph process which begins with a graph $G(0)$ on $n$ isolated vertices.  The graph $G(i)$ is then formed by adding an edge $e_i$ selected uniformly at random from the pairs which neither form edges of $G(i-1)$ nor create a copy of $H$ in $G(i-1) + e_i$.  The process terminates with a maximal $H$-free graph $G(M)$ with $M=M(H)$ edges.

Erd{\H o}s, Suen and Winkler \cite{ESW} suggested this process as a natural probability distribution on maximal $H$-free graphs, and asked for the typical properties of $G(M(H))$, such as size and independence number.  They considered the odd-cycle-free and triangle-free processes, establishing that the former terminates with $\Theta(n^2)$ edges with high probability\footnote{We say a sequence of events $A_n$ occurs \textbf{with high probability}, or simply \textbf{w.h.p.}, if $\lim_{n \to \infty} \pr{A_n} = 1$.}, and that, for some positive constants $c_1,c_2,c_3$,  w.h.p. $c_1n^{3/2} \le M(K_3) \le c_2n^{3/2}(\log n)$ and $\alpha(G(M(K_3))) \le c_3 \sqrt{n}( \log n)$.  These bounds were improved by Spencer \cite{Sp2}, who further conjectured that w.h.p. $M(K_3) = \Theta(n^{3/2} \sqrt{\log n})$.  (We mention that the earliest result on an $H$-free process is due to Ruci\'nski and Wormald \cite{RW}, who that the maximum-degree $d$ process terminates in a graph with $\lfloor nd/2 \rfloor$ edges with high probability - here $H$ is the star graph $K_{1,d+1}$.)

More general $H$-free processes, where $H$ satisfies an additional density condition, were first studied by Bollob\'as and Riordan \cite{BR} and by Osthus and Taraz \cite{OT} independently.  For the remainder of this section, the bounds we mention are assumed to hold w.h.p. unless stated otherwise.  We say a graph $H$ is \textbf{$2$-balanced} if $e(H) \ge 3$, $v(H)\ge 3$, and  \[\frac{e(H)-1}{v(H)-2} \ge \frac{e(F)-1}{v(F)-1}\] for all proper subgraphs $F$ of $H$ with $v(F) \ge 3$, and \textbf{strictly} $2$-balanced if the inequality is sharp for all such $F$; examples of such graphs include cycles, complete graphs, and complete bipartite graphs $K_{r,r}$, $r \ge 2$.  For this class of graphs, Bollob\'as and Riordan established general lower bounds on $M(H)$, and upper bounds for $H \in \{C_4,K_4\}$ that match to within a logarithmic factor.  Osthus and Taraz then gave upper bounds for all strictly $2$-balanced $H$ that match to within a logarithmic factor.  For $H=C_4$, the results of Bollob\'as and Riordan yield $M(C_4) = \Omega(n^{4/3})$ and $M(C_4) = O(n^{4/3} (\log n)^3)$; Osthus and Taraz's results improve the upper bound to $M(C_4) = O(n^{4/3} \log n)$, and they further conjectured that the average degree of the $C_l$-free process is $O((n \log n)^{1/(l-1)})$ for all $l \ge 3$.  Evidence that the lower bound was not sharp came from Wolfovitz \cite{Wolf} who improved the lower bound on $\ev{M(H)}$ for regular strictly $2$-balanced $H$ by a factor of $(\log \log n)^{1/(e(H)-1)}$.

Finally, through an application of the differential equations method (for the general method and examples, see \cite{W}), Bohman \cite{B} showed \(M(K_3) = \Theta(n^{3/2}\sqrt{\log n})\), confirming Spencer's conjecture, and produced an improvement on the lower bound for $M(K_4)$.  Subsequent work by Bohman and Keevash \cite{BK} established new lower bounds on $M(H)$ for all strictly $2$-balanced $H$ by producing lower bounds on the minimum degree of $G(M(H))$, and they conjectured that the likely maximum degree of $G(M(H))$ is at most a constant multiple of their lower bound.  For the $C_4$-free process, their bound on the minimum degree is $\Omega((n \log n)^{1/3})$, yielding $M(C_4) = \Omega(n^{4/3} (\log n)^{1/3})$.

We mention some motivation for studying the $H$-free process comes in part from its connection to two classical areas of extremal combinatorics, Ramsey theory and Tur\'an theory.  Bounds on the independence number of $G(M(K_3))$ found in \cite{ESW} and \cite{Sp2} led to the best lower bounds on $R(3,t)$ known at the time, and Bohman's analysis \cite{B} produced an improvement that matched Kim's celebrated lower bound \cite{K}.  The analysis in \cite{B} and \cite{BK} has also led to the best current lower bounds for the Ramsey numbers $R(s,t)$, with $s \ge 4$ fixed and $t$ large, and the cycle-complete Ramsey numbers $R(C_l,K_t)$ for $l \ge 4$ fixed and $t$ large.  The results for the $K_{r,r}$-free process in \cite{Wolf} and \cite{BK} have resulted in improvements on the best known lower bounds for the Tur\'an numbers $ex(n,K_{r,r})$ for $r \ge 5$.

However, the process has also become a subject of recent interest on its own, in part for aspects of Bohman and Keevash's analysis of the strictly $2$-balanced case that suggest the graph $G(i)$ produced by the process resembles the random graph $G(n,i)$, chosen uniformly at random from all $i$-edge graphs on $n$ vertices, with the exception that it contains no copies of $H$.  To establish their lower bound, they show that a wide range of subgraph extension variables, including the degree of a vertex and the number of copies of a given $H$-free graph $F$, take roughly the same values in $G(i)$ as in $G(n,i)$, for $i$ up to a small multiple of $n^{2-(v(H)-2)/(e(H)-1)}(\log n)^{1/(e(H)-1)}$.  (Similar results on subgraph counts in the $K_3$-free process were obtained by Wolfovitz \cite{Wolf3}.)  In fact, the lower bound and conjectured upper bounds on $M(H)$ in \cite{BK} correspond (within constant factors) to the threshold for the random graph $G(n,i)$ to have the property that the addition of \textit{any} new edge creates a copy of $H$, provided $H$ is strictly $2$-balanced (see \cite{Sp1}).  It is also known (see \cite{GM} and \cite{WA1}) that sufficiently dense subgraphs are unlikely to appear in the final graph $G(M(H))$.

Very recently, Warnke \cite{WA2} and Wolfovitz \cite{Wolf2} have independently given upper bounds on $M(K_4)$ that match Bohman's lower bound to within a constant factor. The author \cite{P} has also established similar bounds for the case where $H$ is the diamond graph, formed by removing an edge from $K_4$. (The diamond graph is $2$-balanced but not strictly so.)  Along with $K_3$, these are the only $2$-balanced graphs containing a cycle for which such bounds on $M(H)$ are currently known. Our aim is to add $C_4$ to this list through the next result.

\begin{theorem}\label{thm:C4upperbd}
There exists $\kappa>0$ such that $\Delta(G(M(C_4))) \le \kappa (n \log(n))^{1/3}$ with high probability.
\end{theorem}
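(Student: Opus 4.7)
The plan is to apply the differential equations method to the $C_4$-free process, following the framework of Bohman and Keevash but pushing the trajectory analysis all the way to the conjectured termination time (rather than only to a small constant multiple thereof) and sharpening it to control each individual vertex degree $d_v(i)$. Introduce scaled time $t = i/n^{4/3}$, so the conjectured termination corresponds to $t = \Theta((\log n)^{1/3})$, fix a large constant $c_0$, and set $T^\star := c_0 (\log n)^{1/3}$, $i^\star := T^\star n^{4/3}$. The central random variables are the number $Q(i)$ of \emph{open} pairs---non-edges $\{x,y\}$ such that $G(i)$ contains no path of length $3$ from $x$ to $y$---and, for each vertex $v$, the number $Q_v(i)$ of open pairs incident to $v$. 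Heuristically $Q(i) \approx q(t)\binom{n}{2}$ and $Q_v(i) \approx q(t) n$ for a function $q$ satisfying an autonomous ODE whose solution decays like $q(t) = \exp(-\Theta(t^3))$, so $q(T^\star) = n^{-\Omega(c_0^3)}$; choosing $c_0$ large enough guarantees $q(T^\star) n \le (n \log n)^{1/3}$.

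First I would revisit the family of subgraph extension variables underlying the Bohman--Keevash analysis---primarily the counts of length-$3$ paths between each non-edge pair, together with the joint extensions needed to evaluate $\mathbb{E}[\Delta Q \mid G(i)]$ and $\mathbb{E}[\Delta Q_v \mid G(i)]$ in closed form---and derive the one-step expectations whose scaling limits are the stated ODEs. To upgrade the heuristic to a high-probability statement, for each tracked variable $Y$ set up a pair of supermartingales of the shape $X^\pm(i) = \pm(Y(i) - y(t) n^{\alpha}) - e(t) n^{\alpha}$ with a carefully shaped error envelope $e(t)$, and apply Freedman's inequality so that each supermartingale becomes positive only with probability $n^{-\omega(1)}$. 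A union bound over the $O(n^2)$ pair variables and $O(n)$ vertex variables then guarantees that, with high probability, every tracked variable stays within its error window throughout $0 \le i \le i^\star$.

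Given such control on $Q$ and $Q_v$, the degree bound is a short two-step argument. For $i \le i^\star$, the conditional probability that the $(i+1)$st edge is incident to $v$ equals $Q_v(i)/Q(i) = (1 + o(1))/n$, so a further Freedman-type bound on the martingale $d_v(i) - \sum_{j<i} Q_v(j)/Q(j)$ yields $d_v(i^\star) \le (2 c_0 + o(1)) (n \log n)^{1/3}$ simultaneously for all $v$ with high probability---the quadratic variation of this martingale is $O((n \log n)^{1/3})$, so Freedman delivers a deviation probability of order $\exp(-c (n\log n)^{1/3})$, easily surviving a union bound over $v$. For $i > i^\star$, each new edge at $v$ consumes one of the $Q_v(i^\star)$ open pairs at $v$ (closed pairs remain closed since edges are only added), so $d_v(M(C_4)) - d_v(i^\star) \le Q_v(i^\star) \le (n \log n)^{1/3}$; combining the two estimates gives $\Delta(G(M(C_4))) \le \kappa (n \log n)^{1/3}$.

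The hardest part will be pushing the trajectory analysis to the full scale $T^\star = \Theta((\log n)^{1/3})$ rather than to a small constant, which is what earlier work achieved. As $t$ grows, $q(t)$ and the coupled extension counts become very small, so the \emph{relative} error in the martingale analysis threatens to blow up, and the argument has to exploit the self-correcting feedback between the variables---an upward deviation in one inducing a counter-force in the others---to prevent a cascading loss of control. Designing error envelopes tight enough to survive the union bound yet slack enough to absorb the martingale fluctuations, especially for the length-$3$ path counts that drive the closure rate and hence the ODE for $q$, is the technical heart of the argument, in analogy with the endgame analyses of Warnke and Wolfovitz for the $K_4$-free process.
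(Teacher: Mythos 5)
Your proposal takes a genuinely different route from the paper, and it has a fundamental gap at exactly the point you flag as the hardest part. The paper does \emph{not} push the trajectory analysis to the conjectured termination time; it deliberately stays inside the Bohman--Keevash regime $i \le m = \mu(\log n)^{1/3}n^{4/3}$ with $\mu$ \emph{small}, and instead exploits a structural observation: if $x,y\in N_{M}(v)\setminus N_m(v)$ then $x,y$ have no common neighbour in $G(m)$ (else, once one of $vx,vy$ is added, the other would close a $C_4$). So $\Delta(G(M))\le\Delta(G(m))+k$ follows once one shows that every $k$-set $K$ with $k=\beta(n\log n)^{1/3}$ is ``covered'' in $G(m)$, i.e.\ contains two vertices sharing a neighbour. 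That covering statement is what the differential equations method is applied to (tracking, for each $K$, the numbers of ``open'' and ``partial'' triples $X_K(i),Y_K(i)$), and it only requires the DE control up to $m$ with $\mu$ small, together with a union bound over $\binom{n}{k}$ choices of $K$.

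The gap in your plan is the claim that you can run the trajectory analysis out to $T^\star = c_0(\log n)^{1/3}$ with $c_0$ a \emph{large} constant and still control $Q$ and $Q_v$ to within $1+o(1)$. In the Bohman--Keevash framework the multiplicative error envelope has the shape $e(t)=e^{W(t^3+t)}-1$ for a large absolute constant $W$ (the paper needs $W\ge 1544$), and the error fraction is $e(t)/s_e$ with $s_e=n^{1/8-\ep}$. At $t=t_{max}=\mu(\log n)^{1/3}$ this gives $e(t_{max})\approx n^{W\mu^3}$, so the analysis only survives if $W\mu^3 < 1/8-\ep$, forcing $\mu$ to be tiny (on the order of $(\ep/W)^{1/3}\approx 0.02$). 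On the other hand, your endgame step $d_v(M)-d_v(i^\star)\le Q_v(i^\star)\le(n\log n)^{1/3}$ needs $q(T^\star)\le(\log n)^{1/3}n^{-2/3}$, i.e.\ $8c_0^3\ge 2/3$, hence $c_0\gtrsim 0.44$---roughly $25\times$ larger than any $\mu$ the error analysis can tolerate. So the error envelope you would inherit blows up super-polynomially long before you reach $T^\star$, and the supermartingale/Freedman machinery you cite gives nothing in that range. Absorbing this via a ``self-correcting'' feedback argument is not a technical refinement of Bohman--Keevash: it is a qualitatively different and much harder programme (later carried out for the triangle-free process by Fiz Pontiveros--Griffiths--Morris and by Bohman--Keevash, and not, to date, for $C_4$). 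Your final bookkeeping step---closed pairs stay closed, so late edges at $v$ consume open pairs at $v$---is sound, but it rests on an estimate for $Q_v(i^\star)$ that the sketched method cannot deliver. This is precisely the obstruction the paper's covering argument is designed to bypass.
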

\noindent This confirms the mentioned conjectures of Osthus and Taraz and of Bohman and Keevash for the $C_4$-free process.  Combined with the lower bound given in \cite{BK}, this has the following immediate corollary.
\begin{corollary}\label{cor:finalsize}
With high probability, $M(C_4) = \Theta(n^{4/3} (\log n)^{1/3})$.
\end{corollary}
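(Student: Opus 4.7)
The corollary follows in a line: Theorem \ref{thm:C4upperbd} gives $M(C_4)\le \frac{n}{2}\Delta(G(M(C_4))) \le (\kappa/2)\, n^{4/3}(\log n)^{1/3}$ w.h.p., and the matching lower bound $M(C_4) = \Omega(n^{4/3}(\log n)^{1/3})$ is Bohman--Keevash \cite{BK}. The genuine work is Theorem \ref{thm:C4upperbd}, which I would attack via the now-standard two-phase approach for $H$-free processes: track the key statistics through a critical time $m^* = c_0 n^{4/3}(\log n)^{1/3}$ (with $c_0$ a suitably large constant) via the differential equations method, then control the short tail $[m^*, M(C_4)]$ by a separate argument. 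At time $m^*$ the expected degree $2m^*/n$ already matches the target upper bound up to a constant factor, so the burden is to show almost no further degree growth after $m^*$.

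For each vertex $v$ and step $i$, the key quantities are the degree $d(v,i)$ and the open-degree $Q(v,i) = |\{w : \text{adding }\{v,w\}\text{ does not create a } C_4\}|$. Heuristically, in a $G(n,i)$-like state $\{v,w\}$ is open iff there is no edge between $N(v)$ and $N(w)$; the expected number of such edges is $d(v)d(w)\cdot 2i/n^2 \approx i^3/n^4$, giving $\pr{\{v,w\}\text{ open}} \approx \exp(-\Theta(i^3/n^4))$ and $Q(v,i) \approx n\exp(-\Theta(i^3/n^4))$. At $i=m^*$ this is $n^{1-c'}$ with $c'$ tunable via $c_0$. To make this rigorous I would follow the Bohman--Keevash setup: derive one-step change estimates for $d(v,\cdot)$, $Q(v,\cdot)$, and an auxiliary family of extension variables (cherries whose closing pairs remain open, plus weighted analogues needed to bound the one-step errors), then apply Freedman's inequality with a union bound over $v$ and $i$ to obtain tracking uniformly on $i\le m^*$.

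For the tail $i>m^*$, the aim is to show that no vertex acquires more than $o((n\log n)^{1/3})$ further edges w.h.p. The mechanism is that every edge added at or near $v$ destroys many of the remaining open pairs through $v$ --- either directly, by using one, or indirectly, by placing an edge between $N(v)$ and some other neighborhood. I would set up a supermartingale on a suitable weighted count of open pairs through $v$, run it in geometrically shrinking sub-windows until $Q(v,\cdot)$ is forced to zero, and union-bound over $v\in[n]$.

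The main obstacle I expect is precisely this tail argument. The differential equations method gives clean tracking deep inside the critical window, but once $Q(v,\cdot)$ has dropped to $n^{o(1)}$ it is small relative to the natural noise scale, so off-the-shelf concentration inequalities no longer suffice. Moreover, beyond $m^*$ the symmetry across vertices is lost: one must rule out ``flare-ups'' in which a single atypical vertex keeps absorbing edges long after the process has otherwise stabilised. Designing the right supermartingale and window schedule, and carefully accounting for how open pairs through $v$ are consumed by edges placed elsewhere, is where I expect the bulk of the technical work to lie.
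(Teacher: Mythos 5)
Your one-line derivation of Corollary~\ref{cor:finalsize} is exactly the paper's: $M(C_4)\le \tfrac n2 \Delta(G(M(C_4)))$ together with Theorem~\ref{thm:C4upperbd} gives the upper bound, and the lower bound is \cite{BK}. That part is correct and identical in spirit.

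You correctly identify that the real content lives in Theorem~\ref{thm:C4upperbd}, but the route you sketch for it is not the one the paper takes, and the main difficulty you anticipate (a separate tail argument past $m^*$, controlling per-vertex open-degree once it becomes $n^{o(1)}$, ruling out ``flare-ups'') is precisely what the paper's argument avoids. The paper never attempts to track the process beyond the Bohman--Keevash window $[0,m]$. Instead it uses a purely combinatorial observation: if $x,y$ are both neighbors of $v$ in $G(M)$ but not in $G(i)$, then at the step each of $vx,vy$ was added it was open, which forces $x$ and $y$ to have had no common neighbor in $G(i)$. Hence $N_M(v)\setminus N_m(v)$ is a set no two of whose elements share a neighbor in $G(m)$. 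The paper then sets $k=\beta(n\log n)^{1/3}$ and proves (Lemma~\ref{lem:trajectory}, via the differential equations method applied to $X_K$, $Y_K$ for each $k$-set $K$) that w.h.p.\ \emph{every} $k$-set already contains two vertices with a common neighbor at step $m$. This yields $\Delta(G(M))\le \Delta(G(m))+k$ directly, with no analysis of the post-$m$ dynamics at all. So the per-vertex supermartingale/window-schedule machinery you propose is unnecessary under the paper's decomposition; if you want to reconstruct the paper's proof, the missing idea is to convert the degree bound into a covering property of $k$-sets in $G(m)$ and track that via two new extension variables (open and partial triples with respect to $K$), rather than to follow the open-degree of individual vertices to the end.
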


\noindent From an upper bound established in \cite{BK}, as well as known bounds on the independence number of $C_4$-free graphs with bounded maximum degree, we arrive at the next result easily.

\begin{corollary}\label{cor:indepno}
With high probability, $\alpha(G(M(C_4))) = \Theta( (n \log n)^{2/3})$.
\end{corollary}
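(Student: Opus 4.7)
The plan is to derive the corollary by combining Theorem~\ref{thm:C4upperbd} with two external inputs: an upper bound on the independence number established for the $C_4$-free process in \cite{BK}, and a classical lower bound on the independence number of any $C_4$-free graph in terms of its maximum degree. The two halves fit together directly, with no further randomness or concentration arguments needed.

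For the upper bound, I would invoke Bohman and Keevash's analysis of the $C_4$-free process: as part of their lower-bound proof for the minimum degree, they establish with high probability that $\alpha(G(i)) = O((n\log n)^{2/3})$ for $i$ up to a constant multiple of $n^{4/3}(\log n)^{1/3}$. Since $M(C_4)$ is of this order (by Corollary~\ref{cor:finalsize}) and the independence number is monotone non-increasing in $i$, this immediately gives $\alpha(G(M(C_4))) = O((n\log n)^{2/3})$ with high probability.

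For the matching lower bound, I would apply Theorem~\ref{thm:C4upperbd} to reduce to a purely deterministic statement: with high probability, $G := G(M(C_4))$ is a $C_4$-free graph on $n$ vertices with maximum degree $\Delta \le \kappa(n\log n)^{1/3}$. I would then invoke the known bound, analogous to Shearer's theorem for triangle-free graphs, that every such $C_4$-free graph satisfies $\alpha \ge c\, n \log \Delta / \Delta$ for some absolute constant $c>0$. Substituting the bound on $\Delta$ yields $\alpha(G) = \Omega(n \log n / (n\log n)^{1/3}) = \Omega((n\log n)^{2/3})$, matching the upper bound.

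The main (and essentially only) obstacle is locating the appropriate $C_4$-free independence-number lower bound in the literature and verifying that it applies verbatim with the max-degree input produced by Theorem~\ref{thm:C4upperbd}; once in hand, the proof is a one-line substitution. Since both external inputs are already standard, no further probabilistic work is required beyond the ``with high probability'' qualifiers already inherited from Theorem~\ref{thm:C4upperbd} and \cite{BK}.
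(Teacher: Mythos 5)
Your proposal is correct and follows essentially the same route as the paper: the upper bound comes directly from the Bohman--Keevash bound $\alpha(G(m)) = O((n\log n)^{2/3})$ together with monotonicity $\alpha(G(M)) \le \alpha(G(m))$ (one only needs $m \le M$ w.h.p., not that $M$ is of the same order), and the lower bound comes from feeding the maximum-degree bound of Theorem~\ref{thm:C4upperbd} into a standard independence-number lower bound for sparse-neighborhood graphs. The paper's specific input is Lemma 12.16(ii) of Bollob\'as (phrased for graphs with bounded average degree and few triangles, with the $C_4$-free property supplying the triangle bound via ``each edge lies in at most one triangle''), while you cite the Shearer-type $\alpha \ge c\,n\log\Delta/\Delta$ form directly; these are interchangeable and yield the same $\Omega((n\log n)^{2/3})$ conclusion.
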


\noindent An immediate consequence of this second corollary is that a typical graph produced by the $C_4$-free process will not essentially improve the lower bound on $R(C_4,K_t)$ given in \cite{BK}.

To establish our bound, we use a fairly simple observation: suppose we fix a vertex $v$ and a step $i \le M=M(C_4)$.  If $x$ and $y$ are neighbors of $v$ in $G(M)$ but are nonadjacent to $v$ in $G(i)$, then $x$ and $y$ have no common neighbors in $G(i)$.  We can therefore establish an upper bound on $\Delta(G(M))$ of the form $\Delta(G(i)) + k$ by showing that every set of $k$ vertices contains two which share a neighbor in $G(i)$.  Thus, to prove Theorem \ref{thm:C4upperbd}, we simply need to make appropriate choices of $i$ and $k$.

The remainder of this paper is organized as follows: in the next section we discuss the $C_4$-free process specifically, including relevant results from \cite{BK}, and in Section \ref{sec:upperbd} we introduce our main technical lemma (Lemma \ref{lem:trajectory}) and prove Theorem \ref{thm:C4upperbd} and Corollary \ref{cor:indepno}.  Section \ref{sec:prelim} will cover a few preliminary results for our proof of Lemma \ref{lem:trajectory}, including a lemma from \cite{BK} which forms the basis for our differential equations method application, and the proof of Lemma \ref{lem:trajectory} will follow in Section \ref{sec:lemma1}.

\section{The $C_4$-free process}\label{sec:c4freeproc}

\subsection{Definitions and notation}\label{sec:defandnot}

We let $[n]=\{1,\ldots,n\}$ be the vertex set of the process, and $G(i)$ the graph given by the first $i$ edges selected by the process.  $G(i)$ naturally partitions $\bin{[n]}{2}$ into three sets, $E(i)$, $O(i)$, and $C(i)$.  $E(i)$ is simply the edge set of the process.  For a pair $uv \notin E(i)$, we say $uv$ is \textbf{open}, and $uv \in O(i)$, if the graph $G(i) + uv$ is $C_4$-free.  Otherwise, we say $uv$ is \textbf{closed} and $uv \in C(i)$.  For $v \in [n]$, we let $N_i(v)$ and $d_i(v)$ denote the neighborhood and degree, respectively, of $v$ in $G(i)$.

For $i \ge 0$ and a pair of vertices $uv \in \bin{[n]}{2} \setminus E(i)$, we define $C_{uv}(i)$ to be the set of pairs $wz \in O(i)$ such that $G(i) + uv + wz$ contains a copy of $C_4$ that uses both $uv$ and $wz$ as edges.  Equivalently, $C_{uv}(i)$ is the collection of open pairs which, if added as edges, would create a path of length three between $u$ and $v$. We mention that, in \cite{BK}, $C_{uv}$ is defined as the set of ordered pairs; we will work exclusively with unordered pairs.

We introduce a continuous time variable $t$, and relate it to the process by setting $t = t(i) = i/n^{4/3}$.  We fix constants $\mu,\ep, V, W$, which satisfy \[0 < \mu \ll \ep \ll \frac{1}{W} \ll \frac{1}{V} \ll \frac{1}{4}.\]  (The notation $0 < a \ll b$ means there is an increasing function $f(x)$ so the arguments which follow are valid for $0 < a < f(b)$.)  Given these constants, we define
\begin{equation}\label{eq:defpmtmax}
p = n^{-2/3}, \ \ \ \ \ m = \mu (\log n)^{1/3} \cdot n^{4/3}, \ \ \text{ and }\ \ \ t_{max} = \mu (\log n)^{1/3}.
\end{equation}
We further define functions $q(t), c(t), P(t), e(t)$ as well as parameters $s=s(n)$ and $s_e=s_e(n)$ as follows:
\begin{align}
\label{eq:qcdef} q(t) &= \exp(-8t^3), &    c(t) &= 24t^2\exp(-8t^3),\\
\label{eq:Pedef} P(t) &= W(t^3 + t), &  e(t) &= e^{P(t)}-1,\\
\label{eq:ssedef} s(n) &= n^2p = n^{4/3},\ \ \ \ \ \ \ \ \ \text{ and }&   s_e(n) &= n^{1/8-\ep}.
\end{align}

We assume that $\ep$ and $\mu$ are chosen sufficiently small that $e(t)$ and $q(t)^{-1}$ are at most $n^{\ep}$ for $0 \le t \le t_{max}$, and $s_e = n^{1/8-\ep} \gg n^{\ep}$, so $e(t)/s_e = o(1)$ (uniformly with respect to $n$) for $0 \le t \le t_{max}$.  We will discuss additional bounds on $\mu,\ep,V$ and $W$ further in Section \ref{sec:ineq}.

\subsection{The lower bound - results of Bohman and Keevash}\label{sec:lowerbd}

Bohman and Keevash \cite{BK} established their lower bound on the $H$-free process by showing that certain random variables are tightly concentrated throughout the initial $m$ steps.  As we do not require the full strength of their results, we summarize the relevant consequences for the $C_4$-free process in the following theorems.

\begin{theorem}[Bohman and Keevash, \cite{BK}]\label{thm:evoC4}
Let $\ca{T}_{i^*}$ denote the event that the following hold for $0 \le i \le i^*$:
\begin{itemize}

\item[1.] \begin{equation}\label{eq:Q}
Q(i) = \lp 1 \pm \frac{e(t)}{s_e} \rp \lp q(t) \pm \frac{1}{s_e} \rp \frac{n^2}{2},\\
\end{equation}

\item[2.] For all $v \in [n]$,
\begin{equation}\label{eq:regular}
d_i(v) = \lp 1 \pm \frac{e(t)}{s_e} \rp \lp 2t \pm \frac{1}{s_e}\rp np,
\end{equation} and so $\Delta(G(i)) \le 4t_{max}np$.

\item[3.] For all $uv \in O(i) \cup C(i)$,
\begin{equation}\label{eq:Cuv}
|C_{uv}(i)| = \lp 1 \pm \frac{e(t)}{s_e} \rp \lp 24t^2 q(t) \pm \frac{12}{s_e} \rp \frac{p^{-1}}{2},
\end{equation}
and for all distinct $uv,u'v' \in O(i)$,
\begin{equation}\label{eq:destfidel}
|C_{uv}(i) \cap C_{u'v'}(i)| \le n^{-1/4}p^{-1}
\end{equation}

\end{itemize}
Then $\ca{T}_m$ holds with high probability.
\end{theorem}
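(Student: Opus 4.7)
This theorem is a specialization to $C_4$ of Bohman and Keevash's general result, and I would reproduce their differential-equations-method argument in this setting. The plan is to simultaneously track the random variables $Q(i) = |O(i)|$, the degrees $d_i(v)$ for all $v \in [n]$, the closing counts $|C_{uv}(i)|$ for all non-edges $uv$, and the pairwise intersections $|C_{uv}(i) \cap C_{u'v'}(i)|$ for all distinct non-edges, showing that each stays within its claimed envelope until step $m$. The trajectories $q(t) n^2/2$, $2tnp$, $24t^2 q(t) p^{-1}/2$ arise as the solutions of the system of differential equations obtained by setting the scaled one-step expected change equal to the time-derivative (using $dt/di = 1/s$ with $s = n^{4/3}$).

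First, I would compute the one-step conditional expectations under $\mathcal{T}_i$. Since $e_{i+1}$ is uniform on $O(i)$,
\[
\mathbb{E}[\Delta Q(i) \mid G(i)] = -1 - \frac{1}{|O(i)|}\sum_{e \in O(i)}|C_e(i)|, \quad \mathbb{E}[\Delta d_i(v) \mid G(i)] = \frac{|\{u : uv \in O(i)\}|}{|O(i)|},
\]
and a more involved identity governs $\Delta|C_{uv}(i)|$: pairs get added from length-$3$ paths between $u$ and $v$ that the edge $e_{i+1}$ completes, while $e_{i+1}$ itself (if open with respect to $uv$) and the pairs in $C_{uv}(i) \cap C_{e_{i+1}}(i)$ get removed. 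Under the inductive assumption that $\mathcal{T}_i$ holds, each of these expectations matches the derivative of the corresponding trajectory up to an error that is absorbed by the envelope $1 \pm e(t)/s_e$.

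Next, for each tracked variable $X$ with trajectory $x(t)$ and envelope $\eta_X(t)$, I would define $X^{\pm}(i) = \pm\bigl(X(i) - x(t(i))\bigr) - \eta_X(t(i))$ and show that, up to the stopping time $\tau = \min\{i : \text{some bound of }\mathcal{T}_i\text{ fails}\}$, both $X^+(i \wedge \tau)$ and $X^-(i \wedge \tau)$ are supermartingales with bounded increments. The expanding error envelope $e(t)$ is chosen precisely so that at the envelope boundary the ``self-correcting'' drift dominates the lower-order mean-matching error. Freedman's inequality then bounds the probability that any one of these crosses its envelope by $\exp(-n^{\Omega(1)})$, and a union bound over the $O(n^2)$ tracked variables and $m = O(n^{4/3}(\log n)^{1/3})$ steps concludes.

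The hard part is the change analysis for $\Delta |C_{uv}(i)|$ and the joint concentration needed for the two-pair bound $|C_{uv}(i) \cap C_{u'v'}(i)| \le n^{-1/4} p^{-1}$. Both require tracking auxiliary \emph{extension variables} that count length-$3$ walks, or pairs of such walks, with prescribed endpoints, and showing these stay close to their $G(n,i)$ counterparts; the hierarchy $\mu \ll \varepsilon \ll 1/W \ll 1/V \ll 1/4$ is precisely what makes the error terms, the envelope growth rate $e(t)$, and the Freedman deviations compatible across all these simultaneous estimates.
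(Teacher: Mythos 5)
The paper does not actually prove this theorem: the statement is attributed to Bohman and Keevash \cite{BK}, and the paper's ``proof'' is the paragraph immediately following, which simply identifies each of the four estimates \eqref{eq:Q}--\eqref{eq:destfidel} with specific results in \cite{BK} (Theorem 1.4, Corollary 6.2, Lemma 8.4 there) and observes that, because the relevant failure probabilities in \cite{BK} are exponentially small, the pointwise statement of Lemma 8.4 can be upgraded to hold simultaneously over all pairs and all steps by a union bound. So the paper's route is reduction to a known result, not a new argument.

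Your proposal takes the genuinely different route of re-deriving the Bohman--Keevash estimates from scratch. At the level of strategy it is a faithful outline of what \cite{BK} actually does: track $Q(i)$, $d_i(v)$, $|C_{uv}(i)|$ and the pairwise intersections; compute the one-step conditional expectations (your formulas for $\mathbb{E}[\Delta Q\mid G(i)]$ and $\mathbb{E}[\Delta d_i(v)\mid G(i)]$ are correct); set up self-correcting supermartingales at the envelope boundary; apply Freedman's inequality and union-bound. You also correctly identify the two hard parts --- the change analysis for $|C_{uv}|$, which drives the entire extension-variable machinery of \cite{BK}, and the ``destruction fidelity'' bound \eqref{eq:destfidel}. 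Two caveats are worth flagging. First, the union bound for \eqref{eq:destfidel} is over $\Theta(n^4)$ pairs $(uv,u'v')$, not $O(n^2)$ as you wrote; this is harmless precisely because the one-shot failure probability is $\exp(-n^{\Omega(1)})$, which is the same point the present paper makes when upgrading Lemma 8.4 of \cite{BK}. Second, and more substantially, the real content of the Bohman--Keevash proof lies in identifying a finite, closed family of ``controllable'' extension variables whose trajectories can be tracked simultaneously and whose one-step increments are bounded by lower-order tracked quantities; your sketch gestures at this (``auxiliary extension variables that count length-$3$ walks, or pairs of such walks'') but does not specify the family or verify closure, which is where essentially all the work would be. As an outline of the underlying argument your proposal is sound; as a substitute for the paper's citation it would require filling in those details, whereas the paper's approach buys brevity by treating \cite{BK} as a black box.
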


\begin{theorem}[Bohman and Keevash, \cite{BK}]\label{thm:indnoub}
With high probability, $\alpha(G(m)) \le 3\mu^{-1}(n \log n)^{2/3}$.
\end{theorem}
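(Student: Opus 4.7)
The plan is a union bound over all $k$-subsets $S \subseteq [n]$, where $k := \lceil 3\mu^{-1}(n\log n)^{2/3}\rceil$. For fixed $S$, set $Y_S(i) := |O(i) \cap \bin{S}{2}|$. Since $e_{i+1}$ is uniform on $O(i)$, the conditional probability that $S$ remains independent at step $i+1$ given independence at step $i$ is exactly $1 - Y_S(i)/Q(i)$. The goal is thus to lower-bound $\sum_{i<m} Y_S(i)/Q(i)$ uniformly in $S$ strongly enough to beat the $\bin{n}{k}$ union bound.

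Working on the event $\ca{T}_m$ of Theorem~\ref{thm:evoC4}, one computes the one-step drift. A pair $uv \in O(i-1) \cap \bin{S}{2}$ becomes closed precisely when $e_i \in C_{uv}(i-1)$, and combining (\ref{eq:Cuv}) with (\ref{eq:Q}) gives this probability as $|C_{uv}(i-1)|/Q(i-1) \approx 24t^2/n^{4/3}$. Hence
\begin{equation*}
\ev{Y_S(i) - Y_S(i-1) \mid \ca{F}_{i-1}} \approx -\frac{24t^2}{n^{4/3}}\, Y_S(i-1),
\end{equation*}
corresponding to the ODE $dY_S/dt = -24t^2 Y_S$ with solution $Y_S(t) = q(t)\bin{k}{2}$. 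This matches the intuition that a uniformly chosen open pair lies in $\bin{S}{2}$ with the same relative frequency as it lies in $\bin{[n]}{2}$.

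The main obstacle is upgrading this heuristic to a concentration statement for $Y_S$ valid uniformly in $i \le m$ and, crucially, with failure probability $\ll \bin{n}{k}^{-1} = \exp(-(1+o(1))\mu^{-1} n^{2/3}(\log n)^{5/3})$, so the union bound over $S$ closes. Subtracting the drift from $Y_S$ produces a supermartingale; applying Freedman's inequality then uses the step-size bound from (\ref{eq:Cuv}) together with the quadratic-variation control from (\ref{eq:destfidel}), since the latter limits the number of pairs in $\bin{S}{2}$ that a single new edge can close simultaneously. Classical Azuma-Hoeffding with only the step bound would be far too weak here; the variance-sensitive refinement is essential. This is the same mechanism Bohman and Keevash use to establish Theorem~\ref{thm:evoC4} itself.

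Granted the event $E$ that $Y_S(i) \ge q(t(i))\bin{k}{2}/2$ for all $i \le m$ and all $S$ (which has probability $1-o(1)$), one has $Y_S(i)/Q(i) \ge (1-o(1))k^2/(2n^2)$ throughout on $E$. Stopping at the first violation of this bound and coupling the indicator of a new edge falling in $\bin{S}{2}$ with i.i.d.~Bernoullis of parameter $k^2/(2n^2)$ yields
\begin{equation*}
\pr{S \text{ indep. in } G(m)} \le \exp\lp -\lp\tfrac{9}{2} - o(1)\rp\mu^{-1} n^{2/3}(\log n)^{5/3}\rp + o\lp\bin{n}{k}^{-1}\rp.
\end{equation*}
Since $\log\bin{n}{k} \le (1+o(1))\mu^{-1} n^{2/3}(\log n)^{5/3}$, the gap is of order $(7/2 - o(1))\mu^{-1} n^{2/3}(\log n)^{5/3}\to\infty$, and the union bound over the $\bin{n}{k}$ sets $S$ yields $o(1)$, completing the proof.
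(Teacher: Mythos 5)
The paper does not prove this statement; it imports it from Bohman--Keevash (their Theorem~1.9, via their Lemmas~11.3 and~12.1), so there is no paper-internal proof to compare against. Your high-level plan --- track $Y_S(i):=|O(i)\cap\binom{S}{2}|$, argue it stays near $q(t)\binom{k}{2}$ with failure probability $o\bigl(\binom{n}{k}^{-1}\bigr)$, then couple hits of $\binom{S}{2}$ to Bernoullis and union-bound --- is the approach of \cite{BK}, and is also the same template this paper follows for $X_K,Y_K$ in Lemma~\ref{lem:trajectory}; your endgame arithmetic ($\tfrac{9}{2}$ against $1$ in the exponent, with $\log\binom{n}{k}\approx\mu^{-1}n^{2/3}(\log n)^{5/3}$) is correct.

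The concentration step, however, is the entire difficulty, and your description of it misfires at the key point. You claim \eqref{eq:destfidel} ``limits the number of pairs in $\binom{S}{2}$ that a single new edge can close simultaneously.'' It does not: \eqref{eq:destfidel} bounds $|C_{uv}(i)\cap C_{u'v'}(i)|$, i.e.\ the number of candidate \emph{edges} that would close \emph{both} of two fixed open pairs $uv,u'v'$ --- a pairwise co-closure (variance) ingredient, not a maximum-decrement bound. The one-step decrement of $Y_S$ is governed by $|C_{e_{i+1}}(i)\cap\binom{S}{2}|$, for which \eqref{eq:Cuv} only gives the trivial bound $|C_{e_{i+1}}(i)|\approx n^{2/3}$. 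Feeding $b\approx n^{2/3}$ and $\lambda\approx q(t)\binom{k}{2}$ into a Freedman-type bound yields an exponent on the order of $\lambda/b\approx\mu^{-2}q\,n^{2/3}(\log n)^{4/3}$, which falls short of the required $\mu^{-1}n^{2/3}(\log n)^{5/3}$ by a $(\log n)^{1/3}$ factor for any fixed $\mu$; the variance term does not rescue this. Obtaining a usable one-step bound on $|C_{e_{i+1}}(i)\cap\binom{S}{2}|$ for independent $S$ is precisely the nontrivial content supplied by BK's Lemmas~11.3/12.1 --- it is the direct analogue of the difficulty this paper confronts in Section~\ref{sec:partbd}, where the trivial bound on $Y_K^-$ is explicitly noted to be insufficient and the separate 3-paths Lemma~\ref{lem:3path} is brought in. That is the step your sketch leaves open, and asserting the event holds ``with probability $1-o(1)$'' does not close it: for the union bound you yourself require per-$S$ failure probability $o\bigl(\binom{n}{k}^{-1}\bigr)$.
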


Recalling that we may choose $\ep$ and $\mu$ so that $e(t)/s_e = o(1)$ and $q(t) \ge n^{\ep} \gg 1/s_e$, $\ca{T}_m$ implies $Q(m)>0$ and consequently the lower bound $M(C_4) \ge \mu n^{4/3}(\log n)^{1/3}$ holds with high probability.

Equation \eqref{eq:Q} follows immediately from Theorem 1.4 of \cite{BK}.  Equation \eqref{eq:regular} follows similarly, while the bound on $\Delta(G(i))$ follows from bounding $e(t)/s_e$ above by $1/3$ and $1/s_e$ above by $t_{max}$.  Equations \eqref{eq:Cuv} and \eqref{eq:destfidel} follow from Corollary 6.2 and Lemma 8.4 of \cite{BK}, respectively.  We mention that the phrasing of Lemma 8.4 suggests that $uv$ and $u'v'$ are fixed.  However, as Lemma 8.4 is shown to be a consequence of a constant (depending on $H$) number of applications of Lemma 5.2, which has exponentially small failure probability (conditioned on their event $\ca{G}_m$), \eqref{eq:destfidel} follows for all such pairs and steps $i$, $0 \le i \le m$, with high probability.  Finally, Theorem \ref{thm:indnoub} follows from the proof of Theorem 1.9 of \cite{BK} (specifically, Lemmas 11.3 and 12.1).

\subsection{The upper bound - proofs of Theorem \ref{thm:C4upperbd} and Corollary \ref{cor:indepno}}\label{sec:upperbd}

Let $\beta>0$ be a fixed constant satisfying
\begin{equation}\label{eq:betabd}
\beta > \frac{4}{\mu^2},
\end{equation}
and define
\begin{equation}\label{eq:kdef}
k = \beta\cdot (n \log n)^{1/3}.
\end{equation}
Our aim is to show that, in $G(m)$, every $k$-element subset of $[n]$ contains two vertices that share a neighbor.  We say a $k$-set $K$ is \textbf{covered} in $G(i)$ if a common neighbor exists in $G(i)$ for some pair of vertices in $K$; $K$ is \textbf{uncovered} otherwise.  As previously mentioned, for any $v \in [n]$, no pair of vertices in $N_{M(C_4)}(v)\setminus N_m(v)$ can have a common neighbor in $G(m)$ - if every $k$-set is covered in $G(m)$, this yields $d_{M(C_4)}(v) \le d_m(v) + k = O( (n \log n)^{1/3})$, the desired bound.

If a set $K$ is covered in $G(i)$, then there exists a triple of vertices $u,v,w$ such that $uv \in \bin{K}{2}$ and $uw,vw \in E(i)$.  We note that the order of $u$ and $v$ is not essential, and that we expect that it is likely (but not necessary!) that the common neighbor $w$ does not lie in $K$.  We therefore restrict ourselves to considering certain subsets of $\bin{K}{2} \times ([n] \setminus K)$.  We will write elements of $\bin{K}{2} \times([n]\setminus K)$ as $(uv,w)$ but will refer to them as triples to avoid confusion with elements of $\bin{[n]}{2}$.
We will also identify each such triple $(uv,w)$ with the subset $\{uw,vw\}$ of $\bin{[n]}{2}$.

We introduce the following definitions: given $K \in \bin{[n]}{k}$ and $0 \le i \le m$, we define
\begin{align*}
X_K(i)& = \left\{(uv,w) \in \bin{K}{2} \times ([n]\setminus K) : uw,vw \in O(i)\right\}, \text{ and }\\
Y_K(i)& = \left\{(uv,w) \in \bin{K}{2} \times ([n]\setminus K) : |\{uw,vw\} \cap O(i)| = |\{uw,vw\} \cap E(i)|= 1\right\}.
\end{align*}

We call triples in $X_K(i)$ \textbf{open with respect to} $K$, and triples in $Y_K(i)$ \textbf{partial with respect to} $K$.  We note that if $(uv,w) \in Y_K(i)$, where, without loss of generality, $uw \in O(i)$, then if we select $e_{i+1} = uw$, $K$ is covered in all steps $i' \ge i+1$.  Equally important, if $K$ is uncovered in $G(i)$, then every pair in $O(i)$ lies in at most one triple $(uv,w) \in Y_K(i)$ (viewed as $\{uw,vw\}$), as otherwise $w$ has at least two neighbors in $K$.

Intuitively, the probability that a given pair of vertices $xy$ is open at time $t=t(i)$ is $\approx q(t)$, while the probability that a pair is an edge is $\approx 2tp$.  It is then reasonable to suspect that
\[|X_K(i)| \approx \bin{k}{2}\cdot (n-k) \cdot q(t)^2 \approx \frac{q(t)^2}{2} \cdot k^2n, \text{ and } \] \[ |Y_K(i)| \approx 2\bin{k}{2}(n-k)(q(t))(2tp) \approx 2tq(t)\cdot k^2np.\]
The following lemma shows that these estimates are correct for uncovered $K$.

\begin{lemma}\label{lem:trajectory}
With high probability, for all $i$, $0 \le i \le m$, and $K \in \bin{[n]}{k}$, if $K$ is uncovered in $G(i)$ then
\begin{align*}
|X_K(i)| &= \lp 1 \pm \frac{e(t)}{n^{3\ep}}\rp \lp \frac{q(t)^2}{2} \pm \frac{1}{n^{3\ep}} \rp k^2n, \text{ and }\\
|Y_K(i)| &= \lp 1 \pm \frac{e(t)}{n^{3\ep}}\rp \lp 2tq(t) \pm \frac{1}{n^{3\ep}} \rp k^2np.
\end{align*}
\end{lemma}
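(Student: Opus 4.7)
For each fixed $K\in\binom{[n]}{k}$, I would track $|X_K(i)|$ and $|Y_K(i)|$ by the differential equations method on the event $\ca{T}_m$ of Theorem~\ref{thm:evoC4} (which holds w.h.p.), stopping the analysis at the first step $\tau_K$ where $K$ becomes covered, and then take a union bound over $K$ and $0\le i\le m$. The target trajectories are $\hat X(t) = \frac{1}{2}q(t)^2 k^2 n$ and $\hat Y(t) = 2tq(t) k^2 np$, which are the solutions to the ODEs obtained from the expected one-step drifts computed below; the initial conditions $X_K(0)=\binom{k}{2}(n-k)$ and $Y_K(0)=0$ sit well inside the claimed envelopes since $e(0)=0$.

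For the drift of $|X_K|$, a triple $(uv,w)\in X_K(i)$ leaves $X_K$ at step $i+1$ exactly when $e_{i+1}\in\{uw,vw\}$ or $e_{i+1}$ closes one of $uw,vw$, i.e., $e_{i+1}\in C_{uw}(i)\cup C_{vw}(i)$. Using $|C_{uw}(i)| = (1+o(1))c(t)p^{-1}/2$ from~\eqref{eq:Cuv}, the bound~\eqref{eq:destfidel} to absorb the overlap $|C_{uw}(i)\cap C_{vw}(i)|$, and $|O(i)| = (1+o(1))q(t)n^2/2$ from~\eqref{eq:Q}, the conditional expected drop in $|X_K|$ per step works out to $|X_K(i)|\cdot 48t^2/n^{4/3}$ to leading order, matching $-\hat X'(t)/n^{4/3}$. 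A parallel computation for $Y_K$ accounts for the gain from $X_K$-triples at which $e_{i+1}\in\{uw,vw\}$ and the loss from $Y_K$-triples whose open pair is either selected (which would force $K$ to be covered and so only occurs at step $\tau_K$) or closed.

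The main obstacle is the concentration step. A single edge $e_{i+1}$ can close as many as $|C_{e_{i+1}}(i)|=\Theta(n^{2/3}(\log n)^{2/3})$ open pairs, and each such closed pair can be the $uw$ or $vw$ of up to $k-1$ different $X_K$-triples, so the worst-case single-step change in $|X_K|$ is of order $k n^{2/3}(\log n)^{2/3}$, much larger than the typical per-step change of order $k^2(\log n)^{2/3}/n^{1/3}$. Plain Azuma--Hoeffding is therefore too weak to overcome the $\binom{n}{k}$-sized union bound. Instead, I would define error processes comparing $|X_K(i)|$ and $|Y_K(i)|$ against their trajectories shifted by $\pm(e(t_i)/n^{3\ep})k^2 n$ and $\pm(e(t_i)/n^{3\ep})k^2 np$, verify from the drift computation that the relevant combinations are supermartingales on $\{i<\tau_K\}$, and apply a Freedman-type exponential-supermartingale bound as in Lemma~5.2 of~\cite{BK}, using the conditional second moment per step (of order $k^3 n^{1/3}(\log n)^{4/3}$ for $|X_K|$, summed to total quadratic variation of order $k^3 n^{5/3}(\log n)^{5/3}$ over all $m$ steps) rather than the worst-case change.

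A direct calculation then gives a per-$K$ tail exponent of order $n^{2/3-O(\ep)}/(\log n)^{O(1)}$, which comfortably exceeds $\log\binom{n}{k} = (1+o(1))k\log(n/k) = O(n^{1/3}(\log n)^{4/3})$ for sufficiently small $\ep>0$. A union bound over the $(m+1)\binom{n}{k}$ pairs $(K,i)$ then completes the proof; note that the specific constant $\beta$ plays no role here beyond being a fixed positive constant, and the precise lower bound $\beta>4/\mu^2$ of~\eqref{eq:betabd} is reserved for the application of this lemma in Section~\ref{sec:upperbd} rather than for its proof.
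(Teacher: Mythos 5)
Your proposal correctly identifies both the overall strategy (track $|X_K|,|Y_K|$ by the differential equations method on $\ca{T}_m$, stop when $K$ is covered, and union bound over $K$) and the central obstacle (worst-case one-step changes are polynomially larger than what a naive Azuma bound can absorb against the $\bin{n}{k}$ union bound). However, your proposed fix — replacing worst-case increments by conditional second moments via a Freedman-type exponential supermartingale inequality — does not resolve the obstacle for $|Y_K|$, and this is exactly where the real work lies. A Freedman-type bound still carries a linear dependence on the largest one-step increment $B$ in its denominator, so the exponent it yields is effectively $\min(\lambda^2/V,\lambda/B)$. For $|Y_K|$ the needed deviation is $\lambda \approx k^2np/n^{3\ep} \approx n^{1-3\ep}$, while the trivial bound on the one-step decrease is $B \approx |C_{e_{i+1}}(i)|+1 = n^{2/3+o(1)}$ (each closed pair lies in at most one partial triple when $K$ is uncovered). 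This gives $\lambda/B \approx n^{1/3-3\ep-o(1)}$, which is strictly smaller than $\log\bin{n}{k} \approx \beta n^{1/3}(\log n)^{4/3}$, so the union bound fails. Your quoted per-$K$ exponent $n^{2/3-O(\ep)}$ comes from the $\lambda^2/V$ term (and may indeed hold for $|X_K|$, where $B \approx kn^{2/3+o(1)}$ and $\lambda \approx n^{5/3-3\ep}$), but the $\lambda/B$ term dominates for $|Y_K|$ and you do not address it.

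The paper's proof closes this gap not probabilistically but combinatorially: it shows that, conditioned on $K$ being uncovered and on an auxiliary a.s. bound $\ca{P}_i$ (Lemma~\ref{lem:3path}: at most $n^{1/4}$ paths of length three between any two vertices, itself proved via Claim~\ref{clm:disjoint3paths} and a first-moment argument on $\ca{T}_m$), the actual one-step decrease in $|Y_K|$ is $O(kn^{1/4}) = n^{7/12+o(1)}$, far below the naive $n^{2/3+o(1)}$. This is done by classifying the removed partial triples $(uv,w)$ according to how the open pair $uw$ meets the chosen edge $e_{i+1}=xy$ (the sets $A_1,A_2,A_3$ in Section~\ref{sec:partbd}), bounding $|A_1|,|A_2|$ by $O(\Delta(G(i)))$ using that $K$ is uncovered, and bounding $|A_3|$ via Claim~\ref{clm:B'bd} using the three-path bound. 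With this sharpened boundedness estimate in hand, the paper simply invokes the black-box ODE-method lemma of Bohman--Keevash (Lemma~\ref{lem:demeth}) on modified variables $\widehat{X}_K,\widehat{Y}_K$ that deterministically follow the trajectory off the good event $\ca{E}_{K,i}$, rather than hand-rolling a Freedman argument. So the missing idea in your proposal is the combinatorial deterministic bound on $Y_K^-$, and in particular Lemma~\ref{lem:3path}; without it, no standard concentration inequality will beat the $\bin{n}{k}$ union bound for $|Y_K|$.
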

\noindent  Via an argument similar to that used to bound the independence number of the $K_r$-free and $C_r$-free processes in \cite{B} and \cite{BK}, we next show how Lemma \ref{lem:trajectory} implies Theorem \ref{thm:C4upperbd}.

\begin{proof}[Proof of Theorem \ref{thm:C4upperbd}]  We may assume the conclusions of Theorem \ref{thm:evoC4} and Lemma \ref{lem:trajectory} hold, as their failure probability is $o(1)$.  We also assume that $\mu,\ep,V$ and $W$ are chosen sufficiently small that $q(t)^{-1}$ and $e(t)$ are at most $n^{\ep}$ on $[0,t_{max}]$, and $s_e \ge n^{3\ep}$.
By Theorem \ref{thm:evoC4}, with high probability, $\Delta(G(m)) \le 4\mu (n \log n)^{1/3}$.  Letting $\kappa = 4\mu + \beta$, we establish the bound in Theorem \ref{thm:C4upperbd} by showing that w.h.p. every $k$-set is covered in $G(m)$.

Given an uncovered $K$ at step $i$, as $e_{i+1}$ is chosen uniformly at random from $Q(i)$ open pairs, and as each partial triple $(uv,w) \in Y_K(i)$ contains a unique open pair, the probability $K$ remains uncovered in $G(i+1)$ is at most $1 - \frac{|Y_K(i)|}{Q(i)}$.  We restrict our attention to bounding the probability that some $k$-set $K$ remains uncovered for all steps $i$, $m/2 \le i \le m$.  For $n$ sufficiently large, $m/2 \ge n^{4/3}$, so in this range of $i$ we may assume $t=t(i) \ge 1$.  Thus, if $K$ is uncovered in $G(i)$ with $m/2 \le i \le m$, then as $s_e \ge n^{3\ep}$, $e(t)/n^{3\ep} \le 1/3$ and $1/n^{3\ep} \le q(t)/2 \le tq(t)/2$
for $t_{max}/2 \le t \le t_{\max}$, from Theorem \ref{thm:evoC4} and Lemma \ref{lem:trajectory} we have
\[\frac{|Y_K(i)|}{Q(i)} = \frac{(1 \pm e(t)/n^{3\ep})(2tq(t) \pm 1/n^{3\ep})k^2np}{(1 \pm e(t)/s_e)(q(t) \pm 1/s_e)n^2/2} \ge \frac{2/3\cdot(3/2)tq(t)k^2np}{4/3\cdot (3/2) q(t)n^2/2}
\ge \frac{t_{\max}k^2p}{2n}.\]

Therefore, the probability that a $k$-set $K$ exists which remains uncovered for all $i$, $m/2 \le i \le m$ is at most
\begin{eqnarray*}
\bin{n}{k} \lp 1-\frac{t_{max}k^2p}{2n}\rp ^{m/2} &\le& n^k\exp\lp -\frac{t_{\max}k^2pm}{4n}\rp\\
&=& n^k\exp\lp -\frac{\mu (\log n)^{1/3} (\beta (n\log n)^{1/3})^2 \cdot n^{-2/3}\cdot \mu n^{4/3}(\log(n))^{1/3}}{4n}\rp\\
&=& n^k\exp\lp -\frac{\mu^2\beta^2n^{1/3}(\log n)^{4/3}}{4}\rp.
\end{eqnarray*} As $n^k = \exp(\beta n^{1/3} (\log n)^{4/3})$, this probability is $o(1)$ provided $\mu^2\beta^2/4 > \beta$, i.e. \eqref{eq:betabd} holds.
\end{proof}

We note again that Corollary \ref{cor:finalsize} follows immediately, so we turn to the proof of Corollary \ref{cor:indepno}.  The upper bound on $\alpha(G(M(C_4)))$ follows from Theorem \ref{thm:indnoub}, as $\alpha(G(M(C_4))) \le \alpha(G(m))$.  For the lower bound, we apply a lemma from \cite{Bo} bounding the independence number of graphs with few triangles.  (Similar bounds are known for a wider class of $H$-free graphs - see \cite{AKS}.)

\begin{lemma}[\cite{Bo}, Lemma 12.16 (ii)]\label{lem:indnolem}
Let $G$ be a graph on $n$ vertices with average degree at most \ $d$ and at most $h$ triangles.  Then
\[\alpha(G) \ge \frac{1}{10}\cdot \frac{n}{d} \lp \log d - \frac{1}{2} \log\lp\frac{h}{n}\rp\rp.\]
\end{lemma}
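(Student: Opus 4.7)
\textbf{Proof proposal for Lemma~\ref{lem:indnolem}.}
My plan is to prove this via an Ajtai--Koml\'os--Szemer\'edi-style random greedy construction, adapted to handle triangles through a Shearer-type entropy inequality.

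First I would reduce to a subgraph with controlled local triangle count. Writing $t(v)$ for the number of triangles of $G$ through $v$, we have $\sum_v t(v) = 3h$. For a parameter $\tau$ (the natural choice will be $\tau \approx h/n$), set $T = \{v : t(v) > \tau\}$, so $|T| \le 3h/\tau$ by Markov's inequality. The subgraph $G' = G - T$ has at least $n - 3h/\tau$ vertices, average degree at most $d$, and every vertex $v \in V(G')$ lies on at most $\tau$ triangles; in particular, the induced neighborhood $G'[N_{G'}(v)]$ contains at most $\tau$ edges.

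Next I would apply the classical random greedy independent-set procedure on $G'$: assign each vertex an independent $\mathrm{Uniform}[0,1]$ label $x_v$ and include $v$ in $I$ iff $x_v < x_u$ for all $u \in N_{G'}(v)$. The set $I$ is automatically independent, but the immediate identity $\pr{v \in I} = 1/(d_{G'}(v)+1)$ only gives the Caro--Wei bound. To extract the characteristic AKS logarithmic factor, I would apply a Shearer-style entropy argument exploiting the near-independence of $N_{G'}(v)$: since $G'[N_{G'}(v)]$ has at most $\tau$ edges, a careful calculation (conditioning on $x_v = t$ and analyzing the event that no $u \in N_{G'}(v)$ blocks $v$) yields a bound of the form
\[ \pr{v \in I} \;\ge\; \frac{c}{d_{G'}(v)+1}\Bigl( \log(d_{G'}(v)+1) \;-\; \tfrac{1}{2} \log(\tau+1) \Bigr), \]
where the first term is the AKS contribution (as if $N_{G'}(v)$ were truly independent) and the correction reflects the at-most-$\tau$ edges actually present in the neighborhood.

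Finally I would sum over $v \in V(G')$, apply Jensen's inequality to $g(x) = (\log x)/x$ (convex and decreasing for $x$ bounded away from $1$) to pass from per-vertex degrees to the average degree $d$, and optimize by taking $\tau \approx h/n$ so that $|T| \le n/2$ and the per-vertex correction collapses to $-\tfrac{1}{2} \log(h/n)$. Collecting constants into $\tfrac{1}{10}$ yields the target $\alpha(G) \ge \alpha(G') \ge \tfrac{n}{10 d}\bigl(\log d - \tfrac{1}{2} \log(h/n)\bigr)$. The hardest step is establishing the Shearer-style per-vertex bound with the correct constant $\tfrac{1}{2}$ in front of $\log(\tau+1)$: this requires a delicate entropy / random-ordering calculation that carefully tracks how each edge present in $N_{G'}(v)$ degrades the AKS integral $\int_0^1 (1-t)^{d_{G'}(v)}\, dt$ that drives the triangle-free bound. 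The remaining steps (Markov, Jensen, and the optimization of $\tau$) are routine by comparison.
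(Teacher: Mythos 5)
The paper does not prove this lemma: it is quoted directly from Bollob\'as's textbook (\cite{Bo}, Lemma~12.16(ii)) and used as a black box in the short proof of Corollary~\ref{cor:indepno}. So there is no proof in the paper against which to compare your attempt.

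As for the proposal itself, the central step as stated fails. For the random set $I$ you define (include $v$ in $I$ iff $x_v < x_u$ for every $u \in N_{G'}(v)$), the probability $\pr{v \in I}$ equals $1/(d_{G'}(v)+1)$ \emph{exactly}, by symmetry of the i.i.d.\ uniform labels on $\{v\} \cup N_{G'}(v)$, regardless of the edge structure inside $N_{G'}(v)$. Your asserted lower bound
\[
\pr{v \in I} \;\ge\; \frac{c}{d_{G'}(v)+1}\Bigl(\log(d_{G'}(v)+1) - \tfrac{1}{2}\log(\tau+1)\Bigr)
\]
therefore cannot hold: already in the triangle-free case $\tau=0$ it would contradict the exact identity once $d_{G'}(v)$ is large. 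The Ajtai--Koml\'os--Szemer\'edi/Shearer logarithmic gain is not obtained by a sharper analysis of the Caro--Wei random independent set; it requires a genuinely different construction --- for instance Shearer's recursive argument (delete a random vertex together with its closed neighborhood, track the expected decrease in a suitable potential, and induct), or the random-sparsification route that Bollob\'as actually uses, where one samples a sparse random induced subgraph, kills the surviving triangles by deleting one vertex from each, and then applies the triangle-free bound to what remains. Your outer reduction (Markov on the local triangle counts, then taking $\tau$ on the order of $h/n$) is a reasonable preprocessing step, but the engine that is supposed to supply the $\log d$ factor does not work as written, and you acknowledge you have not supplied the needed argument.
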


\begin{proof}[Proof of Corollary \ref{cor:indepno}]
Letting $M = M(C_4)$, by Theorem \ref{thm:C4upperbd}, with high probability the average degree of $G(M)$ is at most $\hat{\kappa}=\kappa(n \log n)^{1/3}$, where $\kappa>0$ is a fixed constant.  As $G(M)$ is $C_4$-free, each edge lies on at most one triangle, so $G(M)$ has at most $M/3$ triangles.  Taking $d = \hat{\kappa}$ and $h = \hat{\kappa}n$ in Lemma \ref{lem:indnolem}, and observing that $\log(\hat{\kappa}) \ge \log(n)/3$ for $n$ sufficiently large, we have \[\alpha(G(M)) \ge \frac{1}{10}\cdot \frac{n}{\hat{\kappa}}\lp \frac{1}{2} \log \hat{\kappa}\rp \ge \frac{1}{60\kappa}(n \log n)^{2/3}.\]
\end{proof}

\section{Preliminaries}\label{sec:prelim}

We use the notation ``$\pm$" in two distinct ways throughout this paper.  The notation $a \pm b$ will be taken to mean the interval $\{a + xb : -1 \le x \le 1\}$; distinct instances of $\pm$ used this way in the same expression will be treated independently, i.e. $(a \pm b)(c \pm d)$ will be taken to mean $\{(b + x_1c)(d + x_2e) : -1 \le x_1,x_2 \le 1\}$.  We will also write $a = b \pm c$ instead of $a \in b \pm c$.

For a sequence of random variables $A(1),A(2),\ldots,$, we will use $A^{\pm}$ to denote pairs of sequences of nonnegative random variables $A^+(1),A^+(2),\ldots$ and $A^-(1),A^-(2),\ldots$, such that \[A(i+1)-A(i) = A^+(i) - A^-(i).\]  Similarly, for a differentiable function $f(t)$, we will use $f^+$ and $f^-$ to denote the positive and negative parts of $f'(t)$.

\subsection{A density claim}\label{sec:3paths}

An important part of our argument will be showing that the maximum one-step change in the variables we track is sufficiently bounded.  This will turn out to be straightforward for the number of open triples with respect to a given $K$, but to establish effective bounds on the maximum one-step decrease in the number of partial triples, we will appeal to a simple bound on the number of paths of length three between any two vertices in $G(i)$.  We mention that in the binomial random graph $G(n,p)$, the expected number of such paths is $(n-2)(n-3)p^2 \approx 1$, so the upper bound we establish below of $n^{1/4}$ is reasonable to expect.  We mention that this bound is by no means optimal, but more than suffices for our arguments.

\begin{lemma}\label{lem:3path}
Let $\ca{P}_i$ be the event that, in $G(i)$, for every pair of distinct vertices $u$ and $v$, there are at most $n^{1/4}$ paths of length $3$ between them.  Then, conditioned on $\ca{T}_m$, $\ca{P}_m$ holds with high probability.
\end{lemma}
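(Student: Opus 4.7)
The plan is to fix a pair of distinct vertices $u, v \in [n]$ and track $P_{uv}(i)$, the number of paths of length $3$ between $u$ and $v$ in $G(i)$, via a one-sided martingale concentration bound; the goal is to show that, conditioned on $\ca{T}_m$, $\pr{P_{uv}(m) > n^{1/4}} \le \exp(-\Omega(n^{1/4}))$ --- small enough to survive a union bound over the $\bin{n}{2}$ pairs.

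First I would bound the one-step increment $\delta_i := P_{uv}(i+1) - P_{uv}(i) \ge 0$. A short case analysis on how the added edge $e_{i+1} = xy$ can create new $u$-to-$v$ paths shows $\delta_i \le 2$ deterministically: in the ``middle'' case $\{x,y\} \cap \{u,v\} = \emptyset$, the only new paths are $u$-$x$-$y$-$v$ and $u$-$y$-$x$-$v$, while in an ``endpoint'' case (say $x=u$) the new paths $u$-$y$-$w$-$v$ are indexed by $w \in N_i(y) \cap N_i(v)$, a set of size at most $1$ since $G(i)$ is $C_4$-free. The same analysis bounds the total increment summed over all candidate choices of $e_{i+1}$ in $O(i)$: the middle-case contribution is at most $2|N_i(u)||N_i(v)|$, and each endpoint-case contribution is at most $\sum_{w \in N_i(v)}(d_i(w)-1)$ (or the symmetric sum), each of which is $O(\Delta(G(i))^2)$. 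Under $\ca{T}_i$ we have $\Delta(G(i)) = O((n\log n)^{1/3})$ and $Q(i) \ge q(t)n^2/4 \ge n^{2-\ep}/4$, so $\ev{\delta_i \mid G(i)} \le O(\Delta(G(i))^2/Q(i)) = O(n^{-4/3+\ep}(\log n)^{2/3})$, and summing over $i < m$, $\sum_{i<m} \ev{\delta_i \mid G(i)} = O(n^{\ep}\log n) = o(n^{1/4})$.

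To apply concentration cleanly while conditioning on $\ca{T}_m$, I would stop the process at $\tau := \min\{i : \ca{T}_i \text{ fails}\}$, set $\tilde P(i) := P_{uv}(\min(i,\tau))$, and form its Doob martingale $M_i := \tilde P(i) - \sum_{j<i}\ev{\tilde P(j+1) - \tilde P(j) \mid G(j)}$. The increments of $M$ are bounded by a constant, and since each stopped increment $\tilde P(j+1) - \tilde P(j) \in \{0,1,2\}$, the conditional variance at step $j$ is at most $2\ev{\tilde P(j+1) - \tilde P(j) \mid G(j)}$, so the total predictable quadratic variation is $O(n^{\ep}\log n)$. Freedman's inequality then yields $\pr{M_m \ge n^{1/4}/2} \le \exp(-\Omega(n^{1/4}))$; on $\ca{T}_m$, $\tau > m$ (so $\tilde P(m) = P_{uv}(m)$) and the predictable part is $o(n^{1/4})$, so $P_{uv}(m) > n^{1/4}$ forces $M_m > n^{1/4}/2$. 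A union bound over the at most $n^2$ pairs $(u,v)$ concludes.

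The main obstacle --- really the only step that goes beyond routine bookkeeping --- is the case analysis converting the $C_4$-free plus bounded-degree structure provided by $\ca{T}_i$ into the $O(\Delta(G(i))^2)$ bound on the weighted count of ``dangerous'' open pairs; once that is in hand, the stopping-time-plus-Freedman machinery is standard. The threshold $n^{1/4}$ has considerable slack over the true expectation of $O(\log n)$, which is precisely what allows the $\exp(-\Omega(n^{1/4}))$ tail needed to absorb the $n^2$ union bound.
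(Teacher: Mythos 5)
Your proposal is correct, but it takes a genuinely different route from the paper. The paper first establishes the simple structural observation (its Claim~\ref{clm:disjoint3paths}) that in a $C_4$-free graph any two paths of length $3$ between $u$ and $v$ are edge-disjoint, so having $n^{1/4}$ such paths in $G(m)$ forces $n^{1/4}$ distinct steps $i$ at which the chosen edge $e_i$ lands in $C_{uv}(i-1)$. Conditioned on $\ca{T}_m$ the per-step probability of this is at most $|C_{uv}(i-1)|/Q(i) \le n^{2\ep}/n^{4/3}$ (using the concentration \eqref{eq:Cuv} of $|C_{uv}|$), and a direct union bound over $\bin{m}{n^{1/4}}$ subsets of steps gives an $\exp(-n^{1/4})$ tail, which absorbs the $\bin{n}{2}$ pairs. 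You instead track $P_{uv}(i)$ as a (stopped) submartingale, bound the one-step increment deterministically by $2$ via a $C_4$-freeness case analysis, bound the per-step drift by $O(\Delta(G(i))^2/Q(i))$ so the cumulative drift is $O(n^{\ep}\log n) = o(n^{1/4})$, and then invoke Freedman's inequality; this uses only the degree bound and $Q(i) \ge n^{2-\ep}$ from $\ca{T}_m$, not the $|C_{uv}|$ estimate. Both routes are sound and give the same $\exp(-\Omega(n^{1/4}))$ failure probability; the paper's argument is somewhat more compact and avoids any martingale machinery by exploiting edge-disjointness, while yours is more ``standard-toolbox'' and has the mild advantage of needing a weaker consequence of $\ca{T}_m$. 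One subtlety worth stating explicitly in your write-up: you should note that once $uv$ itself becomes an edge, no further $3$-paths from $u$ to $v$ can be created (since $u$-$a$-$b$-$v$ plus $uv$ is a $C_4$), so $\delta_i = 0$ from that point on; this is implicit in your analysis but it keeps the argument uniform over all pairs, including those that end up as edges.
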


\noindent To prove Lemma \ref{lem:3path} we first establish a simple claim.

\begin{claim}\label{clm:disjoint3paths}
Given distinct vertices $u$ and $v$ in a $C_4$-free graph $G$,
any two paths of length $3$ between $u$ and $v$ are edge-disjoint.
\end{claim}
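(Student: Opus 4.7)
The plan is a short proof by contradiction. Suppose we have two distinct length-$3$ paths between $u$ and $v$, say $P_1 = u\, a_1\, b_1\, v$ and $P_2 = u\, a_2\, b_2\, v$, and that they share an edge $e$. The starting observation is that, since $u \neq v$, the three edges of any length-$3$ $uv$-path play structurally distinct roles: the first edge is the unique edge containing $u$, the last edge is the unique edge containing $v$, and the middle edge contains neither. Consequently, a shared edge $e$ must occupy the same position in $P_1$ as in $P_2$. This reduces the analysis to three cases.

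First, if $e$ is the first edge of both paths, then $a_1 = a_2 =: a$, and since $P_1 \neq P_2$ we have $b_1 \neq b_2$. But then $a\, b_1\, v\, b_2$ is a $C_4$ in $G$ (using the edges $a b_1, b_1 v, v b_2, b_2 a$ from the two paths), contradicting the hypothesis. The case where $e$ is the last edge is symmetric and yields a $C_4$ through $u\, a_1\, b_1\, a_2$ (after identifying $b_1 = b_2$).

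The remaining case is when $e$ is the middle edge of both paths, i.e.\ $\{a_1,b_1\} = \{a_2,b_2\}$. Either $(a_1,b_1) = (a_2,b_2)$, in which case $P_1 = P_2$, contradicting distinctness; or $(a_1,b_1) = (b_2,a_2)$. In the latter situation, $u$ is adjacent to both $a_1$ and $b_1$ (the former from $P_1$, the latter because $b_1 = a_2$ and $u a_2 \in E(G)$), and similarly $v$ is adjacent to both $a_1$ and $b_1$; hence $u\, a_1\, v\, b_1$ is a $C_4$, again a contradiction.

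Since every case forces either $P_1 = P_2$ or a $C_4$, no edge can be shared, and the claim follows. The argument is essentially a bookkeeping exercise; the only point requiring care is the swapped middle-edge subcase, where one has to trace how the vertex identifications produce adjacencies on both sides to exhibit the $C_4$.
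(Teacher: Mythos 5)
Your proof is correct and follows essentially the same three-case argument as the paper's (shared edge incident to $u$, incident to $v$, or the middle edge), exhibiting the same $C_4$'s in each case. The only stylistic difference is that you explicitly justify why a shared edge must occupy the same position in both paths, a fact the paper uses implicitly.
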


\begin{proof}
Suppose $(u,x,y,v)$, $(u,x',y',v)$ are distinct paths from $u$ to $v$ in $G$ that share an edge.  If the shared edge is $ux$ (so $x=x'$), then the vertices $x,y,v,y'$ form a $C_4$, a contradiction; similarly if the shared edge is $vy$.  If the shared edge is $xy$, then, as the two paths are distinct, $x'=y, y'=x$, and the vertices $u,y,v,x$ form a $C_4$, again a contradiction.
\end{proof}

\begin{proof}[Proof of Lemma \ref{lem:3path}]
We fix vertices $u$ and $v$ and bound the probability that there are $n^{1/4}$ such paths between them in $G(m)$.  On any such path between $u$ and $v$, for the last of the three edges added, $e=e_i$, we must have $e_i \in C_{uv}(i-1)$ by definition.  Conditioned on $\ca{T}_m$, for $i=1,\ldots,m$, the probability that $e_i \in C_{uv}(i-1)$ is at most $|C_{uv}(i-1)|/Q(i) \le n^{\ep}p^{-1}/n^{2-\ep} = n^{2\ep}/n^{4/3}$.  By Claim \ref{clm:disjoint3paths}, as any two such paths are edge-disjoint, it suffices to bound the probability that $e_i \in C_{uv}(i-1)$ for $n^{1/4}$ steps $i$: noting $m \le n^{\ep}\cdot n^{4/3}$ for $n$ sufficiently large, this is at most \[\bin{m}{n^{1/4}}\cdot \lp \frac{n^{2\ep}}{n^{4/3}}\rp^{n^{1/4}} \le \lp \frac{men^{2\ep}}{n^{1/4}n^{4/3}} \rp^{n^{1/4}}\le \lp \frac{en^{3\ep}}{n^{1/4}} \rp^{n^{1/4}} \le \exp(-n^{1/4}).\]
The result then follows from a union bound over the $\bin{n}{2}$ choices of $u$ and $v$.
\end{proof}

\noindent As $\ca{P}_m$ implies $\ca{P}_i$ for any $i$, $0 \le i \le m$, Lemma \ref{lem:3path} and Theorem \ref{thm:evoC4} imply $\ca{P}_i$ holds for all $i$, $0 \le i \le m$, with high probability.

\subsection{The differential equations method}\label{sec:demeth}

To show that our variables follow the conjectured trajectories, we appeal to an approach to the differential equations method presented in Lemma 7.3 from \cite{BK}.  The only difference in the statement is the notation change of $X_{j,A}^{\pm}$ instead of $Y_{j,A}^{\pm}$.  We reproduce from \cite{BK} the setup for this lemma: suppose we have a stochastic graph process defined on $[n]$, where $n$ is large.  Let $r$ be a fixed positive integer, and for $j \in [r]$, let $k_j, S_j$ be parameters (which can depend on $n$).

Suppose for each $j \in [r]$ and $A \in \bin{[n]}{k_j}$, there is a sequence of random variables $X_{j,A}(i)$, defined for $i=0,\ldots,m$ and measurable with respect to the underlying graph process.

Further, we suppose \[X_{j,A}(i+1) - X_{j,A}(i) = X^{+}_{j,A}(i) - X^{-}_{j,A}(i),\] where $X^{+}_{j,A}(i),X^{-}_{j,A}(i) \ge 0$.
We relate these sequences to functions on $[0,\infty)$ by letting $t=i/s$ for some function $s=s(n)$ that tends to infinity.  The goal is then to argue that, for some collection $x_j(t)$ of continuous functions, \[X_{j,A}(i) \approx x_j(t)S_j\] for all $j \in [r]$ and $A \in \bin{[n]}{k_j}$, $i=0\ldots,m$.  We view $1 \le j \le r$ as the type of random variable, and the set $A$ as giving its position in the graph.  The parameter $S_j$ is the size-scaling for the $j$th type of random variable.

\begin{lemma}[\cite{BK}, Lemma 7.3]\label{lem:demeth}
Let $0 < \epsilon < 1$ and $c,C> 0$ be constants, and suppose for each $j \in [r]$ we have a parameter $s_j(n)$ and functions $x_j(t),e_j(t),\theta_j(t),\gamma_j(t)$ that are smooth and nonnegative for $t \ge 0$.  For $i^* = 1,2,\ldots,m$, let $\ca{G}_{i^*}$ be the event that \[X_{j,A}(i) = \lp 1 \pm \frac{e_j(t)}{s_j} \rp\lp u_j(t) \pm \frac{\theta_j(t)}{s_j}\rp S_j\] for all $1 \le i \le i^*$, $1 \le j \le r$, and $A \in \bin{[n]}{k_j}$.  Suppose there is also a decreasing sequence of events $\ca{H}_i$, $1 \le i \le m$, such that $\lim_{n \to \infty} \pr{\ca{H}_m \mid \ca{G}_m} = 1$, and that the following conditions hold:

\begin{itemize}
    \item[1.] (Trend hypothesis) When conditioning on $\ca{G}_i \land \ca{H}_i$, we have \[\ev{X^{\pm}_{j,A}} =  \lp x^{\pm}_j(t) \pm \frac{h_j(t)}{4s_j}\rp\frac{S_j}{s},\]
        for all $j \in [r]$ and $A \in \bin{[n]}{k_j}$, where $x^{\pm}_j(t)$ and $h_j(t)$ are smooth nonnegative functions such that \[x_j'(t) = x_j^+(t) - x_j^-(t) \ \ \text{ and }\ \  h_j(t) = (e_jx_j + \gamma_j)'(t);\]
    \item[2.] (Boundedness hypothesis) For each $j \in [r]$, conditional on $\ca{G}_i \land \ca{H}_i$, we have \[X_{j,A}^{\pm}(i) < \frac{S_j}{s_j^2 k_j n^{\epsilon}};\]
    \item[3.] (Initial conditions) For all $j \in [l]$, we have $\gamma_j(0) = 0$ and $X_{j,A}(0)= S_jx_j(0)$ for all $A \in \bin{[n]}{k_j}$;
    \item[4.] We have $n^{3\epsilon} < s < m < n^2$, $m \le n^{\ep/2}s$, $s \ge 40Cs_j^2 k_j n^{\epsilon}$, $n^{2\epsilon} \le s_j < n^{-\epsilon} s$,
        \[\inf_{t \ge 0} \theta_j(t) + e_j(t)x_j(t)/2 - \gamma_j(t)/2 > c,\]
        \[\sup_{t \ge 0} |x_j^{\pm}(t)| < C, \ \ \ \ \ \sup_{t \ge 0} |x_j'(t)| < C, \ \ \ \ \ \  \int_0^{\infty} |x_j''(t)|\ dt < C,\]
        \[ \sup_{0 \le t \le m/s} |h_j(t)| < n^{\epsilon}, \ \ \ \ \ \int_0^{m/s} |h_j'(t)|\ dt < n^{\epsilon}.\]
\end{itemize}
Then $Pr[\ca{G}_m \land \ca{H}_m] \to 1$ as $n \to \infty$.\\

\end{lemma}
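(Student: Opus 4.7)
I would follow the standard martingale realization of the differential equations method used in \cite{B,BK}: for each pair $(j,A)$, construct two deterministic envelopes around the trajectory $x_j(t)S_j$ that match the two-sided bound in $\mathcal{G}_{i^*}$, define signed deviation processes, show each is a supermartingale while $\mathcal{G}_i \land \mathcal{H}_i$ persists, and apply a concentration inequality over $i \le m$. A union bound over the $2r\binom{n}{\max_j k_j}$ envelope/index combinations, combined with $\Pr[\mathcal{H}_m \mid \mathcal{G}_m] \to 1$, then yields $\Pr[\mathcal{G}_m \land \mathcal{H}_m] \to 1$.

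\textbf{Envelope construction.} Writing $t = i/s$, I would set
\[
Y^{+}_{j,A}(i) \;=\; X_{j,A}(i) - \Bigl(1 + \tfrac{e_j(t)}{s_j}\Bigr)\Bigl(x_j(t) + \tfrac{\theta_j(t) + \gamma_j(t)}{s_j}\Bigr)S_j,
\]
and define $Y^{-}_{j,A}(i)$ symmetrically, so that $\mathcal{G}_{i^*}$ becomes $\{Y^{\pm}_{j,A}(i) \le 0 : \text{all } j,A,\; 1\le i \le i^*\}$. Conceptually, $\gamma_j$ absorbs accumulated one-step trend error while $\theta_j$ leaves room for stochastic fluctuation; the hypothesis $\inf_t \theta_j + e_jx_j/2 - \gamma_j/2 > c$ forces strict positivity of the margin at every $t$, and $\gamma_j(0)=0$ together with $X_{j,A}(0) = x_j(0)S_j$ makes $Y^{\pm}_{j,A}(0) < 0$ with room to spare.

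\textbf{Supermartingale calculation.} A first-order Taylor expansion of the envelope at time $t$ gives its one-step increment as $\bigl[x_j'(t) + (e_jx_j + \gamma_j + \theta_j)'(t)/s_j\bigr]S_j/s$ plus second-order errors whose $i$-sums are bounded by $\int |x_j''|\,dt$ and $\int |h_j'|\,dt$ from condition 4. The trend hypothesis gives
\[
\mathbb{E}[X_{j,A}(i+1) - X_{j,A}(i)\mid \mathcal{F}_i] \;=\; \bigl(x_j'(t) \pm h_j(t)/(2s_j)\bigr)\,S_j/s,
\]
and since $h_j = (e_jx_j+\gamma_j)'$, the conditional drift of $Y^{+}_{j,A}$ is at most $-\theta_j'(t)S_j/(s_j s) - h_j(t)S_j/(2s_j s) + O\!\bigl(S_j/(s_j^2 s)\bigr)$. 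Summed over $i$, the initial margin coming from $\theta_j + e_jx_j/2 - \gamma_j/2 > c$ together with this nonpositive drift keeps $Y^{+}_{j,A}$ a supermartingale until $\mathcal{G}_i \land \mathcal{H}_i$ first fails; the argument for $Y^{-}_{j,A}$ is identical up to signs.

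\textbf{Concentration and main obstacle.} The boundedness hypothesis $X^{\pm}_{j,A}(i) < S_j/(s_j^2 k_j n^{\epsilon})$ bounds the one-step changes of $Y^{\pm}_{j,A}$, so applying Freedman's inequality (or Azuma--Hoeffding on the stopped process) over $m \le n^{\epsilon/2}s$ steps gives failure probability of order $\exp\!\bigl(-\Omega(k_j n^{\epsilon/2})\bigr)$, which beats the $\binom{n}{k_j} \le n^{k_j}$ factor in the union bound precisely because $s \ge 40Cs_j^2k_j n^{\epsilon}$. The main technical obstacle is the drift computation itself: one must verify that the negative main terms $-\theta_j'/s_j - h_j/(2s_j)$ dominate \emph{both} the $\pm h_j/(2s_j)$ trend-hypothesis error \emph{and} the second-order Taylor remainders generated by differencing a product envelope (in particular cross terms like $e_j\theta_j/s_j^2$). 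This is exactly what the positivity condition and the integrability bounds on $x_j''$, $h_j$, and $h_j'$ in condition 4 are designed to supply, but the bookkeeping must be done uniformly in $(j,A)$ and $i \le m$ so that one concentration inequality delivers the full event $\mathcal{G}_m$ at once.
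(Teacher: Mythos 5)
This statement is not proved in the paper at all: it is Lemma 7.3 of Bohman and Keevash \cite{BK}, quoted verbatim and used as a black box, so there is no internal proof to compare your attempt against. Your outline does follow the same general route as the actual proof in \cite{BK} (self-correcting supermartingale estimates derived from the trend hypothesis, a martingale concentration inequality driven by the boundedness hypothesis, and a union bound over the $2r\bin{n}{k_j}$ variables, with $\ca{H}_m$ attached at the end via $\pr{\ca{H}_m \mid \ca{G}_m}\to 1$), so as a sketch of ``how such a lemma is proved'' it is pointed in the right direction.

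As a standalone proof, however, it has concrete gaps. First, your envelope does not match the event $\ca{G}_{i^*}$: by inserting $\gamma_j$ into the band, i.e.\ $\bigl(1+e_j/s_j\bigr)\bigl(x_j+(\theta_j+\gamma_j)/s_j\bigr)S_j$, you enlarge the region, so showing $Y^{\pm}_{j,A}\le 0$ would not establish $\ca{G}_m$. The role of $\gamma_j$ is different: since the trend hypothesis allows a per-step error of only $h_j(t)/(4s_j)\cdot S_j/s$ with $h_j=(e_jx_j+\gamma_j)'$, the accumulated drift error is at most a quarter of $(e_jx_j+\gamma_j)(t)S_j/s_j$, and the hypothesis $\inf_t\,\theta_j+e_jx_j/2-\gamma_j/2>c$ then guarantees a residual gap of at least $cS_j/s_j$ that the martingale fluctuation must cross before $\ca{G}$ can fail; $\gamma_j$ is a compensator budget, not extra band width. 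Second, the drift computation as written does not come out with the claimed signs: differencing your product envelope produces a $-\theta_j'(t)$ term which is not nonpositive for a general smooth nonnegative $\theta_j$, and the trend hypothesis gives an error of $\pm h_j/(4s_j)$, not a guaranteed downward push of $h_j/(2s_j)$; the correct bookkeeping subtracts the compensator $(e_jx_j+\gamma_j)(t)S_j/(4s_j)$ so that the stopped process is a supermartingale regardless of the sign of the trend error. Third, the concentration step genuinely needs a variance-aware inequality (Freedman/Bernstein type), using $\ev{X^{\pm}_{j,A}}\le (C+o(1))S_j/s$ from the trend hypothesis together with $\sup_t|x_j^{\pm}|<C$ --- this is exactly where the constant $C$ and the condition $s\ge 40Cs_j^2k_jn^{\epsilon}$ enter; plain Azuma with the worst-case step bound $S_j/(s_j^2k_jn^{\epsilon})$ over up to $m\le n^{\ep/2}s$ steps does not in general beat the $\bin{n}{k_j}$ union bound. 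You do name Freedman, but the claimed rate $\exp(-\Omega(k_jn^{\ep/2}))$ only follows once the per-step variance, not merely the per-step range, is controlled, and the second-order Taylor errors must be absorbed using the $\int|x_j''|$, $\sup|h_j|$, $\int|h_j'|$ bounds of condition~4 before that rate is legitimate.
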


\subsection{Additional inequalities and the constants $\mu,\ep,V,W$}\label{sec:ineq}

As much of the remainder of this paper will be devoted to verifying the conditions of Lemma \ref{lem:demeth}, we take the opportunity now to gather a few simple inequalities.  First, in addition to the constraints on $\mu,\ep,V$ and $W$ implicit in \cite{BK}, the following bounds suffice for our application:
\[ V \ge 40,\ \ \ \ \ \ \ \ \ \ \ \ W \ge \frac{640e^{2V}}{V \log(2)}, \ \ \ \ \ \ \ \ \ \ \ \ \ \ep \le \frac{1}{100},\] and $\mu$ is chosen sufficiently small so that $e^{P(t)} \le n^{\ep/2}$ for all $t \in [0,t_{max}]$, provided $n$ is sufficiently large.

We observe that as $t_{max} = \mu(\log n)^{1/3} = o(n^{\alpha})$ for any $\alpha>0$, it follows that if $F(t)$ is a fixed polynomial and $\alpha > 0$, we have $|F(t)| \le n^{\alpha}$ on $[0,t_{max}]$ for all $n$ sufficiently large.
To simplify some of our later calculations, we mention a few additional inequalities which follow directly from our choice of the constants above, \eqref{eq:defpmtmax}-\eqref{eq:ssedef}, and \eqref{eq:kdef}, for all $t$ in $[0,t_{max}]$ and $n$ sufficiently large:
\[1 \le q(t)^{-1} \le q^2(t)e^{P(t)} \le q(t)e^{P(t)} \le e^{P(t)} \le n^{\ep},\]

\[q(t)s_e \ge n^{5 \epsilon},\ \ \ \ \ \ \ \ \ \ \ \ \text{ and }\ \ \ \ \ \ \ \ \ \ \ \ n^{1/3} \le k \le n^{1/3 + \ep}.\] Furthermore, conditioned on the event $\ca{T}_{i^*}$, $0 \le i^* \le m$, by Theorem \ref{thm:evoC4} we have \[Q(i) \ge n^{2-\ep} \ \ \ \text{ and } \ \ \ \ |C_{uv}(i)| \le n^{2/3 + \ep}\] for $0 \le i \le i^*$ and all $uv \in O(i) \cup C(i)$.

Finally, we will repeatedly make use of the following simple lemma.
\begin{lemma}
Suppose $\eta=\eta(n) \to 0$ as $n \to \infty$, and $a,b$ are positive integers.  Then, for $n$ sufficiently large,
\begin{itemize}
\item[1.] $(1 \pm a\eta)(1 \pm b\eta) \subseteq (1 \pm (a+b+1)\eta)$.
\item[2.] $(1 \pm a\eta)^{-1} \subseteq (1 \pm (a+1)\eta)$.
\end{itemize}
\end{lemma}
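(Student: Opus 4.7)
The plan is to expand each expression directly and bound the resulting error term by a single term linear in $\eta$, using only that $\eta \to 0$ to absorb the higher order contributions.

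For part 1, I would write a typical element of $(1\pm a\eta)(1\pm b\eta)$ as $(1+x_1 a\eta)(1+x_2 b\eta)$ with $|x_1|,|x_2|\le 1$. Expanding gives
\[
1 + (x_1 a + x_2 b)\eta + x_1 x_2 ab\,\eta^2.
\]
The linear term is bounded in absolute value by $(a+b)\eta$, and the quadratic term by $ab\,\eta^2$. Since $\eta \to 0$, for $n$ large enough we have $ab\,\eta \le 1$, so $ab\,\eta^2 \le \eta$; combining, the total deviation from $1$ is at most $(a+b+1)\eta$, which is the required containment.

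For part 2, a typical element of $(1 \pm a\eta)^{-1}$ is $(1+x a\eta)^{-1}$ with $|x|\le 1$. For $n$ large we have $a\eta < 1$, and then
\[
(1+xa\eta)^{-1} = 1 - xa\eta + \frac{(xa\eta)^2}{1+xa\eta},
\]
where the remainder term has absolute value at most $\frac{a^2\eta^2}{1-a\eta}$. The first correction contributes at most $a\eta$, so the total deviation from $1$ is at most $a\eta + \frac{a^2\eta^2}{1-a\eta}$. For $n$ large enough that $a(a+1)\eta \le 1/2$, say, the second summand is at most $\eta$, giving the bound $(a+1)\eta$ as desired.

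No real obstacle arises here; the only subtlety is to state cleanly what ``$n$ sufficiently large'' means in each part, namely that $n$ is large enough to force $ab\,\eta \le 1$ for part 1 and $a(a+1)\eta \le 1/2$ (or any comparable bound ensuring $\frac{a^2\eta^2}{1-a\eta}\le \eta$) for part 2. Both conditions are automatic from $\eta(n)\to 0$ and the fact that $a,b$ are fixed positive integers.
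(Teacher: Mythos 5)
Your proof is correct and follows essentially the same route as the paper, which simply expands the expressions and absorbs the $O(\eta^2)$ error using $\eta^2=o(\eta)$, invoking the series expansion of $(1+x)^{-1}$ for part 2. Your write-up just makes the ``$n$ sufficiently large'' thresholds explicit, which the paper leaves implicit.
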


\begin{proof}
Both containments follow from $\eta^2 = o(\eta)$, the latter from considering the series expansion of $(1 + x)^{-1}$.
\end{proof}

\section{Proof of Lemma \ref{lem:trajectory}}\label{sec:lemma1}

Our proof of Lemma \ref{lem:trajectory} will follow from an application of Lemma \ref{lem:demeth}.  We recall $s=s(n) = n^{4/3}$, and, for $t \ge 0$, we define
\begin{align*}
x(t)& = q(t)^2/2,& x^+(t)& = 0,& x^-(t)& = c(t)q(t) = 24t^2q(t)^2,\\
y(t)& = 2tq(t),\ \ & y^+(t)& = 2q(t),\ \ \ \ \ \text{ and}& y^-(t)& = 2tc(t) = 48t^3q(t).
\end{align*} so that $x^{\pm},y^{\pm}$ are nonnegative on $[0,\infty)$, $x' = x^+-x^-$ and $y'=y^+-y^-$.

For $K \in \bin{[n]}{k}$ and $0 \le i \le m$, let $\ca{U}_{K,i}$ denote the event that $K$ is uncovered in $G(i)$, and let $\ca{E}_{K,i}$ be the event
\[\ca{E}_{K,i} = \ca{U}_{K,i} \land \ca{T}_i \land \ca{P}_i,\] where $\ca{T}_i$ and $\ca{P}_i$ are defined in Theorem \ref{thm:evoC4} and Lemma \ref{lem:3path}, respectively.  We are only interested in ensuring bounds that hold with high probability for uncovered $K$ at each step $i$, and, by Theorem \ref{thm:evoC4} and Lemma \ref{lem:3path}, $\pr{\overline{\ca{T}_m} \lor \overline{\ca{P}_m}} = o(1)$, so it suffices to show the desired bounds hold with high probability for all $K$ and $i$ for which $\ca{E}_{K,i}$ holds.  We will therefore apply Lemma \ref{lem:demeth} to a modified collection of random variables that follow the correct trajectory deterministically on the event $\overline{\ca{E}_{K,i}}$, which we define as follows.

For $i=0,1,\ldots,m$:
\begin{equation*}
X_K^+(i) = \begin{cases}
|X_K(i+1)\setminus X_K(i)| & \text{ if } \ca{E}_{K,i} \text{ holds,}\\
x^+(t)\cdot k^2n/s & \text{ otherwise,}
\end{cases}
\end{equation*}
\begin{equation*}
X_K^-(i) = \begin{cases}
|X_K(i)\setminus X_K(i+1)| & \text{ if }\ca{E}_{K,i} \text{ holds,}\\
x^-(t)\cdot k^2n/s & \text{ otherwise,}
\end{cases}
\end{equation*}
and
\begin{equation*}
\widehat{X}_K(i) = \begin{cases}
|X_K(0)|+\frac{k^3+kn-k^2}{2} & \text{ if } i=0\\
\widehat{X}_K(i-1) + X^+_K(i) - X^-_K(i) & \text{ otherwise.}
\end{cases}
\end{equation*}
\noindent Similarly, let
\begin{equation*}
Y_K^+(i) = \begin{cases}
|Y_K(i+1)\setminus Y_K(i)| & \text{ if } \ca{E}_{K,i} \text{ holds,}\\
y^+(t)\cdot k^2np/s & \text{ otherwise,}
\end{cases}
\end{equation*}
\begin{equation*}
Y_K^-(i) = \begin{cases}
|Y_K(i)\setminus Y_K(i+1)| & \text{ if } \ca{E}_{K,i} \text{ holds,}\\
y^-(t)\cdot k^2np/s & \text{ otherwise,}
\end{cases}
\end{equation*}
and
\begin{equation*}
\widehat{Y}_K(i) = \begin{cases}
|Y_K(0)| & \text{ if } i=0,\\
\widehat{Y}_K(i-1) + Y^+_K(i) - Y^-_K(i) & \text{ otherwise.}
\end{cases}
\end{equation*}
\noindent It follows that on the event $\ca{E}_{K,i}$, $\widehat{X}_K(i) = |X_K(i)|+\frac{k^3 + kn-k^2}{2} \approx |X_K(i)|$ and $\widehat{Y}_K(i) = |Y_K(i)|$.\\

To set up our application, we recall $m=m(n) = \mu (\log n)^{1/3}n^{4/3}$, and we let $c = 1/4$ and take $C>0$ to be a sufficiently large constant.  We let $k_1 = k_2 = k$, $x_1=x$, $x_2 = y$, and for $K \in \bin{[n]}{k}$, we let \[X_{1,K}(i) = \widehat{X}_K(i), \ \ \ \ \ \ S_1 = k^2n,\ \ \ \ \ \ \ X_{2,K}(i) = \widehat{Y}_K(i),\ \ \ \text{ and } S_2 = k^2np.\] As $\widehat{X}_K^{\pm} = X_K^{\pm}$ and $\widehat{Y}_K^{\pm} = Y_K^{\pm}$, we will write the latter for ease of reading.

We define, for $t \ge 0$,
\begin{equation*}
\gamma(t) = \frac{1}{4}\lp 1 - \exp(-640e^{2V}\cdot t)\rp \ \ \ \ \ \ \text{and}\ \ \ \ \ \ \ \ \theta(t) = \frac{1}{2} + \gamma(t).
\end{equation*}
For $j \in \{1,2\}$ we define the remaining error parameters as
\[s_j = n^{3\ep},\ \ \ \ \ e_j(t) = e(t), \ \ \ \ \ \ \ \gamma_j(t) = \gamma(t)\ \ \ \ \ \ \text{ and }\ \ \ \ \ \theta_j(t) = \theta(t),\] where $e(t)$ is defined in \eqref{eq:Pedef}.

So, for $0 \le i^* \le m$, $\ca{G}_{i^*}$ is the event that
\begin{equation} \label{eq:good}
\begin{split}
\widehat{X}_K(i) &= \lp 1 \pm \frac{e(t)}{s_1} \rp \lp x(t) \pm \frac{\theta(t)}{s_1} \rp k^2n \text{ and }\\
\widehat{Y}_K(i) &= \lp 1 \pm \frac{e(t)}{s_2} \rp \lp y(t) \pm \frac{\theta(t)}{s_2} \rp k^2np
\end{split}
\end{equation}
for all $K \in \bin{[n]}{k}$ and $0 \le i \le i^*$.  We take the event $\ca{H}_i = \ca{G}_i$ for all $i$, which trivially is decreasing and satisfies $\lim_{n \to \infty} Pr(\ca{H}_m | \ca{G}_m) = 1$.

The initial conditions follow easily: $\widehat{Y}_K(0) = 0 = y(0)k^2np$, and \[\widehat{X}_K(0) = \bin{k}{2}(n-k) + \frac{k^3 + kn-k^2}{2} = \frac{k^2n}{2} = x(0)k^2n.\]
We point out that as $\theta(t) \le 3/4$ and $(k^3 + kn-k^2)/2 = o(1) \cdot k^2n/s_1$, the conclusions of Lemma  \ref{lem:trajectory} hold on the event $\ca{G}_m \land \ca{T}_m \land \ca{P}_m$, so as $\ca{T}_m \land \ca{P}_m$ holds with high probability, it suffices to show $\ca{G}_m$ holds with high probability.

We next note that, as intended, the trend and boundedness hypotheses follow deterministically for $X_K^{\pm}$ and $Y_K^{\pm}$ on the event $\overline{\ca{E}_{K,i}}$ - the trend hypothesis is trivial.  The boundedness hypothesis follows from the inequalities (which we will establish!) $|x^{\pm}| \le C, |y^{\pm}| \le C$ and $s \ge 40Cs_j^2k_jn^{\ep}$ for $j \in \{1,2\}$.  It therefore remains to show they hold when conditioned on $\ca{G}_i \land \ca{E}_{K,i}$.

\subsection{Open triples}\label{sec:open}
\subsubsection{Trend hypothesis}\label{sec:opentrend}

As $X_K^+(i)=0$ for all $i$, the trend hypothesis for $X_K^+$ follows, so we turn to $X_K^-$.  To simplify our calculations, all functions in the expressions which follow are assumed to be evaluated at $t=t(i)$, and we will write $q$ in place of $q(t)$, etc..  To avoid potential confusion, we will use ``$e$'' to refer to the function defined in \eqref{eq:Pedef}, and ``$\es$" to refer to the constant $\es = 2.718\ldots$.

Conditioned on $\ca{E}_{K,i}$, a triple $(uv,w) \in X_K(i)$ gets counted by $X_K^-(i)$ if and only if $e_{i+1} \in \{uw,vw\} \cup C_{uw}(i) \cup C_{vw}(i)$.  As $K$ is uncovered, it follows that $uw \notin C_{vw}(i)$ and vice-versa, and therefore the probability of this occurring, conditioned on $\ca{G}_i \land \ca{E}_{K,i}$, is

\begin{eqnarray*}
\frac{|C_{uw}(i) \cup C_{vw}(i)| + 2}{Q(i)} &=& \frac{2(1 \pm e/s_e)(c \pm 12/s_e)p^{-1}/2 \pm (n^{-1/4}p^{-1} + 2)}{(1 \pm e/s_e)(q \pm 1/s_e)n^2/2}\\
&\subseteq& \frac{2(1 \pm e/s_e)(c \pm (12/s_e + n^{-1/4} + 2p))p^{-1}/2}{(1 \pm e/s_e)q(1 \pm 1/(qs_e))n^2/2}\\
&\subseteq& \frac{2(1 \pm e/s_e)(c \pm 13/s_e)}{(1 \pm e/s_e)q(1 \pm 1/(qs_e))}\cdot \frac{p^{-1}}{n^2}\\
&\subseteq& 2 \lp 1 \pm \frac{e}{s_e}\rp \lp 1 \pm \frac{2e}{s_e}\rp \lp \frac{c}{q} \pm \frac{13}{qs_e}\rp \lp 1 \pm \frac{2}{qs_e} \rp \cdot \frac{p^{-1}}{n^2}\\
&\subseteq& 2 \lp 1 \pm \frac{4e}{s_e} \rp \lp \frac{c}{q} \pm \frac{13 + 2c/q + 26/(qs_e)}{qs_e} \rp \cdot \frac{p^{-1}}{n^2}\\
&\subseteq& \lp 1 \pm \frac{4e}{s_1} \rp \lp \frac{2c}{q} \pm \frac{1}{s_1} \rp \cdot \frac{p^{-1}}{n^2},
\end{eqnarray*} where the last containment follows from $c/q = 24t^2 \le n^{\ep}$ and $qs_e \ge n^{2\ep}s_1$. Summing this over all $(uv,w) \in X_K(i)$ and using \eqref{eq:good} yields
\begin{eqnarray*}
\ev{X_K^-(i) | \ca{G}_i \land \ca{E}_{K,i}} &=& \lp 1 \pm \frac{e}{s_1}\rp \lp x \pm \frac{\theta}{s_1} \rp k^2n \cdot \lp 1 \pm \frac{4e}{s_1} \rp \lp \frac{2c}{q} \pm \frac{1}{s_1} \rp \cdot \frac{p^{-1}}{n^2}\\
&\subseteq& \lp 1 \pm \frac{6e}{s_1}\rp \lp \frac{2xc}{q} \pm \frac{x + 2c/q + 1/s_1}{s_1} \rp \cdot \frac{k^2n}{s}\\
&\subseteq& \lp 1 \pm \frac{6e}{s_1}\rp \lp \frac{2xc}{q} \pm \frac{48t^2 + 2}{s_1} \rp \cdot \frac{k^2n}{s}\\
&\subseteq& \lp \frac{2xc}{q} \pm \frac{12exc/q + 48t^2 + 2 + 6e(48t^2+2)/s_1}{s_1} \rp \frac{k^2n}{s}\\
&\subseteq& \lp \frac{2xc}{q} \pm \frac{288t^2xe + 48t^2 + 3}{s_1} \rp \frac{k^2n}{s}.
\end{eqnarray*}

It  remains to show that $288t^2x(t)e(t) + 48t^2 + 3 \le h_1(t)/4$, where $h_1(t) = (xe + \gamma)'(t)$: routine calculations yield
\begin{eqnarray*}
h_1 &=& (-48t^2x)e + xP'\es^{P(t)} + \gamma'\\
&\ge& -48t^2x\es^{P(t)} + W(3t^2 + 1)x\es^{P(t)}\\
&\ge& W(t^2+1)x\es^{P(t)},
\end{eqnarray*}
as $48 \le 2W$.  As the inequalities $x(t)e(t) \le x(t)\es^{P(t)}$ and $1 \le 2x(t)\es^{P(t)}$ hold, it suffices to show \[(384t^2 + 6)x(t)\es^{P(t)} \le \frac{W}{4}(t^2+1)x(t)\es^{P(t)},\] which follows as $W \ge 4\cdot 384 = 1544$.

\subsubsection{Boundedness hypothesis}\label{sec:openbd}

As no new open triples are created in any step, the bound follows for $X_K^+$ trivially.  Furthermore, as every open pair in $O(i)$ lies in at most $k$ triples in $X_K(i)$, conditioned on $\ca{G}_i \land \ca{E}_{K,i}$ we have \[|X_K^-(i)| \le (|C_{e_{i+1}}(i)| + 1)\cdot k \le n^{2\ep}\cdot n < \frac{kn}{n^{7\ep}} = \frac{S_1}{s_1^2kn^{\ep}},\] as $k \ge n^{1/3} > n^{9\ep}$ for $n$ sufficiently large.

\subsection{Partial triples}\label{sec:part}

\subsubsection{Trend hypothesis}\label{sec:parttrend}

We begin by establishing the bounds for $\ev{Y_K^+ | \ca{G}_i \land \ca{E}_{K,i}}$.  A triple $(uv,w) \in X_K(i)$ enters $Y_K(i+1)$ if and only if $e_{i+1} \in \{uw,vw\}$, which occurs with probability \[\frac{2}{Q(i)} = \frac{2}{(1 \pm e/s_e)(q \pm 1/s_e)n^2/2} = \frac{2}{q} \cdot \lp 1 \pm \frac{2e}{s_e} \rp \lp 1 \pm \frac{2}{qs_e} \rp \cdot \frac{2}{n^2}.\] Summing over the triples $(uv,w) \in X_K(i)$ and using \eqref{eq:good} yields
\begin{eqnarray*}
\ev{Y^+_K(i) | \ca{G}_i \land \ca{E}_{K,i}} &=& \lp 1 \pm \frac{e}{s_1} \rp \lp x \pm \frac{\theta}{s_1} \rp k^2n \cdot \frac{2}{q} \cdot \lp 1 \pm \frac{2e}{s_e} \rp \lp 1 \pm \frac{2}{qs_e} \rp \cdot \frac{2}{n^2}\\
&\subseteq& 4 \lp 1 \pm \frac{4e}{s_1} \rp \lp \frac{x}{q} \pm \frac{q^{-1}}{s_1} \rp  \lp 1 \pm \frac{2}{qs_e} \rp  \cdot  \frac{k^2}{n}\\
&\subseteq& 4 \lp 1 \pm \frac{4e}{s_1} \rp \lp \frac{x}{q} \pm \frac{q^{-1}}{s_1} \rp  \lp 1 \pm \frac{1}{s_1} \rp  \cdot  \frac{k^2np}{s}\\
&\subseteq& 4 \lp 1 \pm \frac{4e}{s_1} \rp \lp \frac{x}{q} \pm \frac{q^{-1}+x/q+q^{-1}/s_1}{s_1} \rp  \cdot  \frac{k^2np}{s}\\
&\subseteq& 4 \lp 1 \pm \frac{4e}{s_1} \rp \lp \frac{x}{q} \pm \frac{q^{-1}+1}{s_1} \rp  \cdot  \frac{k^2np}{s}\\
&\subseteq& 4\lp \frac{x}{q} \pm \frac{4exq^{-1} + q^{-1} + 1 + 4e(q^{-1}+1)/s_1}{s_1} \rp \cdot  \frac{k^2np}{s}\\
&\subseteq& \lp \frac{4x}{q} \pm \frac{8qe + 4q^{-1} + 8}{s_1} \rp \cdot  \frac{k^2np}{s},
\end{eqnarray*} where the last containment follows from the inequality $4e(q^{-1}+1) < 8n^{2\ep} < s_1$ for $n$ sufficiently large.

It remains to show that $8q(t)e(t) + 4q(t)^{-1} + 8 < h_2(t)/4$, where $h_2(t)=(ye+\gamma)'(t)$.  Straightforward calculations yield \begin{eqnarray}
h_2 &=& \nonumber (2-48t^3)qe + (2tq)W(3t^2 + 1)\es^{P(t)} + \gamma'\\
&\ge& \label{eq:h2lowerbd} 2W(t^3+t)q\es^{P(t)}+ \gamma'.
\end{eqnarray}  To show the desired inequality, we consider two cases: $t < V/W$ and $t \ge V/W$.
If $t < V/W <1$, then as $t^3+t \le 2t$, \[\es^{P(t)} \le \es^{W(2V/W)}= \es^{2V}.\] As $q(t)\es^{P(t)} \ge q(t)^{-1} \ge 1$, it follows that $8q(t)e(t) + 4q(t)^{-1} + 8 \le 20q(t)\es^{P(t)} \le 20\es^{2V}$.  On the other hand, we have \[\frac{h_2(t)}{4} \ge \frac{\gamma'(t)}{4} = \frac{640\es^{2V}\cdot \es^{-640\es^{2V}t}/4}{4} \ge \frac{640\es^{2V}}{32} = 20\es^{2V},\] as $W \ge 640\es^{2V}\cdot V/\log(2)$.
If $t \ge V/W$, then, as $\gamma'$ is nonnegative, \[\frac{h_2(t)}{4} \ge \frac{W}{2}\cdot t q(t)\es^{P(t)} \ge \frac{V}{2} q(t)\es^{P(t)},\] which suffices as $V \ge 40$.\\

Next, we turn to $\ev{Y^-_K|\ca{G}_i \land \ca{E}_{K,i}}$: for each triple $(uv,w) \in Y_K(i)$, where without loss of generality $uw \in O(i)$, the probability that $(uv,w)$ gets counted by $Y^-_K(i)$ conditioned on $\ca{G}_i \land \ca{E}_{K,i}$ is
\begin{eqnarray*}
\frac{|C_{uw}(i)| + 1}{Q(i)} &=& \frac{(1 \pm e/s_e)(c \pm 12/s_e)p^{-1}/2+1}{(1 \pm e/s_e)(q \pm 1/s_e)n^2/2}\\
&\subseteq& \lp 1 \pm \frac{4e}{s_e} \rp \lp \frac{c}{q} \pm \frac{2c/q + 13 + 26/(s_eq)}{s_eq}\rp \frac{p^{-1}}{n^2}\\
&\subseteq& \lp 1 \pm \frac{4e}{s_2} \rp \lp \frac{c}{q} \pm \frac{1}{s_2}\rp \frac{p^{-1}}{n^2},
\end{eqnarray*} the last containment following from $s_eq \ge n^{\ep}s_2$ and $2c/q = 48t^2 \le n^{\ep}$ on $[0,t_{max}]$.

Consequently,
\begin{eqnarray*}
\ev{Y_K^-(i) | \ca{G}_i \land \ca{E}_{K,i}} &=& \lp 1 \pm \frac{e}{s_2}\rp \lp y \pm \frac{\theta}{s_2} \rp k^2np \cdot \lp 1 \pm \frac{4e}{s_2} \rp \lp \frac{c}{q} \pm \frac{1}{s_2}\rp \frac{p^{-1}}{n^2}\\
&\subseteq& \lp 1 \pm \frac{6e}{s_2}\rp \lp \frac{yc}{q} \pm \frac{y + c/q + 1/s_2}{s_2}\rp \frac{k^2np}{s}\\
&\subseteq& \lp 1 \pm \frac{6e}{s_2}\rp \lp \frac{yc}{q} \pm \frac{26t^2 + 1}{s_2}\rp \frac{k^2np}{s}\\
&\subseteq& \lp \frac{yc}{q} \pm \frac{6eyc/q + 26t^2+1 + 6e(26t^2+1)/s_2}{s_2} \rp \frac{k^2np}{s}\\
&\subseteq& \lp 48t^3q(t) \pm \frac{288t^3qe + 26t^2+2 }{s_2} \rp \frac{k^2np}{s}.
\end{eqnarray*}

To establish the required bound, we first observe that, using $t^2 \le t^3 + t$ and $q(t)\es^{P(t)} \ge 1$ for $t \ge 0$, we have \[288t^3q(t)e(t) + 26t^2 + 2 \le 314(t^3+t)q(t)\es^{P(t)}+2.\]
As $W \ge 4(314) = 1256$, we have $314(t^3+t)q(t)\es^{P(t)} \le \frac{W}{4}(t^3+t)q(t)\es^{P(t)}$, so by \eqref{eq:h2lowerbd} it suffices to show that \[2 \le \frac{W}{4}(t^3+t)q(t)\es^{P(t)} + \frac{\gamma'(t)}{4}.\]  Again considering the cases $t < V/W$ and $t \ge V/W$ separately, by the arguments given above we have $\gamma'(t)/4 \ge 20e^{2V}$ for $t \le V/W$ and $\frac{W}{4}(t^3+t)q(t)\es^{P(t)} \ge V/4$ for $t \ge V/W$, which suffices as $V \ge 8$.

\subsubsection{Boundedness hypothesis}\label{sec:partbd}

We recall that it suffices to show the boundedness hypothesis holds conditioned on $\ca{G}_i \land \ca{E}_{K,i} = \ca{G}_i \land \ca{U}_{K,i} \land \ca{T}_i \land \ca{P}_i$, which we assume throughout this subsection.  We start with $Y_K^+$: as a given open pair lies in at most $k$ open triples, we have \[|Y_K^+(i)| \le k < k\cdot \frac{n^{1/3}}{n^{7\ep}} = \frac{S_2}{s_2^2kn^{\ep}}.\]

Turning to $Y_K^-$, we recall that as $K$ is uncovered, each partial triple $(uv,w) \in Y_K(i)$ contains a unique open pair, which we will take to be $uw$ without loss of generality.  However, the trivial bound $|Y_K^-(i)| \le |C_{e_{i+1}}(i)| + 1$ does not suffice, as for most steps $i$, $|C_{e_{i+1}}(i)| = n^{2/3+o(1)}$, while the required upper bound $S_2/(s_2^2kn^{\ep}) = n^{2/3-7\ep+o(1)}$, so we must be more careful.

A triple $(uv,w) \in Y_K(i)$ is counted by $Y_K^-(i)$ if and only if $e_{i+1} \in \{uw\} \cup C_{uw}(i)$.  Suppose $e_{i+1} = xy$: we separately bound the number of partial triples $(uv,w)$ removed from $Y_K(i)$ based on the intersection of the open pair $uw$ with $xy$: trivially, at most one such triple has $uw=xy$, so let
\begin{eqnarray*}
A_{1} &=& \{(uv,w) \in Y_K(i): uw \in O(i) \cap C_{xy}(i),\  uw \cap xy= \emptyset\},\\
A_{2} &=& \{(uv,w) \in Y_K(i): uw \in O(i) \cap C_{xy}(i),\ uw \cap xy = \{w\}\}, \text{ and}\\
A_{3} &=& \{(uv,w) \in Y_K(i): uw \in O(i) \cap C_{xy}(i),\  uw \cap xy = \{u\}\}.
\end{eqnarray*}

Suppose first that $(uv,w) \in A_1$: as $uw \in C_{xy}(i)$, we have that either $\{ux,wy\} \subseteq E(i)$ or $\{uy,wx\} \subseteq E(i)$: in the former case, as $K$ is uncovered we must have $u$ as the unique neighbor of $x$ in $K$ and $w$ as a neighbor of $y$, for which there at most $\Delta(G(i))$ such choices.  Analogous reasoning for the second case yields
\begin{equation}\label{eq:A1bd}
|A_1| \le 2\Delta(G(i)) \le 8\mu (n \log n)^{1/3}.
\end{equation}

Next, consider a $(uv,w) \in A_2$, and suppose first that $x = w$: then there is a vertex $z$ such that $\{yz,zu\} \subseteq E(i)$.  There are at most $\Delta(G(i))$ choices of $z$ adjacent to $y$, and fixing $z$, at most one choice of $u \in K$; analogous reasoning for the case $y=w$ yields
\begin{equation}\label{eq:A2bd}
|A_2| \le 8\mu (n \log n)^{1/3}.
\end{equation}

Turning now to $A_3$, we partition $A_3$ into $A_3' \cup A_3''$, where $A_3'$ contains those triples $(uv,w) \in A_3$ with $u = x$, and $A_3''$ contains those triples with $u = y$. We first bound $|A_3'|$: let \[B = \{w \in [n]: (uv,w) \in A_3' \text{ for some } v \in K\}.\]  As for all $(uv,w) \in A_3'$ we have $u=x$ and as no open pair is contained in more than one partial triple, it follows that $|B| = |A_3'|$.  Since for each $w \in B$ we also have $uw \in C_{xy}(i)$, it follows that there exists a vertex $z_w$ such that $\{yz_w,z_ww\} \subseteq E(i)$. Let \[B' = \{w \in B: z_w \notin K, yw \notin E(i)\}.\]

\begin{claim}\label{clm:B'bd}
\[|B| - 2\Delta(G(i)) \le |B'| \le kn^{1/4}.\]
\end{claim}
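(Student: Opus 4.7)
The plan is to prove the two inequalities separately. Two standing facts will drive everything: (i) $K$ being uncovered in $G(i)$ forces every vertex of $G(i)$ to have at most one neighbor in $K$, since any vertex with two or more neighbors in $K$ would serve as a common neighbor for some pair in $K$; and (ii) on $\ca{P}_i$, any two distinct vertices are joined by at most $n^{1/4}$ paths of length three in $G(i)$.

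For the lower bound, it suffices to show $|B \setminus B'| \le 2\Delta(G(i))$. Vertices $w \in B \setminus B'$ split into those with $yw \in E(i)$, of which there are at most $\Delta(G(i))$ since each is a neighbor of $y$, and those with $z_w \in K$. In the second case, $z_w$ lies in $K \cap N_i(y)$, an intersection of size at most one by fact (i); then $w$ must be one of the at most $\Delta(G(i))$ neighbors of this unique vertex, giving the stated bound when combined with the first case.

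For the upper bound, I will associate to each $w \in B'$ a path of length three in $G(i)$ from $y$ to some vertex of $K$, then apply fact (ii). Because an open pair lies in at most one partial triple with respect to $K$, the pair $xw$ uniquely determines a triple $(xv_w, w) \in A_3'$ with $v_w \in K$ and $v_w w \in E(i)$; combined with the edges $yz_w, z_w w$ arising from $xw \in C_{xy}(i)$, this yields a walk $y - z_w - w - v_w$. The main obstacle is confirming that this walk is actually a path on four distinct vertices, so that fact (ii) applies. Distinctness of vertices joined by an edge is automatic, while the conditions $z_w \notin K$ and $yw \notin E(i)$ defining $B'$, together with $v_w \in K$, exclude $z_w = v_w$ and $y = v_w$; and $y \ne w$ holds since $xw \in C_{xy}(i)$ excludes the pair $xy$ itself. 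With distinctness in hand, fact (ii) gives at most $n^{1/4}$ such paths from $y$ to each $v \in K$, and summing over $v \in K$ yields $|B'| \le k n^{1/4}$.
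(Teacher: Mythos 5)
Your proof is correct and takes essentially the same approach as the paper's: both bound $|B \setminus B'| \le 2\Delta(G(i))$ by splitting according to whether $yw \in E(i)$ or $z_w \in K$ and using that $K$ being uncovered forces $|K \cap N_i(y)| \le 1$, and both deduce $|B'| \le kn^{1/4}$ by exhibiting a length-three path $y$--$z_w$--$w$--$v_w$ from $y$ into $K$ for each $w \in B'$ and invoking $\ca{P}_i$. Your argument is slightly more explicit in verifying that this walk has four distinct vertices, a point the paper leaves implicit.
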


\begin{proof}
For the lower bound, it follows trivially that $y$ has at most  $\Delta(G(i))$ neighbors in $B$.  Similarly, if $w,w' \in B$ with $z_w,z_{w'} \in K$, then, as $z_w,z_{w'}$ are neighbors of $y$ and $K$ is uncovered, $z_{w}=z_{w'}$, so we have $z_w \in K$ for at most $\Delta(G(i))$ distinct $w \in B$.

For the upper bound, we note that each $w \in B'$ has exactly one neighbor in $K \setminus \{y\}$, and that $z_w \notin K$.  It follows that there are at least $|B'|$ paths of length $3$ in $G(i)$ from $y$ to $K \setminus \{y\}$, so for some $v \in K\setminus \{y\}$, there are at least $|B'|/k$ paths of length $3$ from $y$ to $v$.  But as $\ca{P}_i$ holds, there are also at most $n^{1/4}$ such paths, and the result follows.
\end{proof}

\noindent From Claim \ref{clm:B'bd}, \[|A_3'| = |B| \le |B'| + 2\Delta(G(i)) \le kn^{1/4} + 8\mu(n \log(n))^{1/3},\] and applying analogous arguments to $|A_3''|$ lets us conclude
\begin{equation}\label{eq:A3bd}
|A_3| \le 2kn^{1/4} + 16\mu(n \log(n))^{1/3}.
\end{equation}

Finally, combining \eqref{eq:A1bd}-\eqref{eq:A3bd} with the fact that $(n \log n)^{1/3} = o(kn^{1/4})$, we have
\[Y_K^-(i) \le 1 + |A_1| + |A_2| + |A_3| \le 2kn^{1/4} + o(kn^{1/4}) \le 3kn^{1/4}\] for $n$ sufficiently large.
As $k \ge n^{1/3}$ and $3k \le n^{1/3 + \ep}$ for $n$ sufficiently large, therefore
\[Y_K^-(i) \le n^{7/12 + \ep} \le n^{8/12-7\ep} = \frac{n^{2/3}}{n^{7\ep}} \le \frac{S_2}{s_2^2kn^{\ep}},\] provided $\ep \le 1/96$, and the boundedness hypothesis is verified.

\subsection{Analytic considerations}\label{sec:analytic}

Here we verify the remaining inequalities from Part 4. of Lemma \ref{lem:demeth}, recalling that we chose $c = 1/4$ and $C$ sufficiently large.  First, from \eqref{eq:defpmtmax}, we have $t_{\max} = \mu(\log n)^{1/3} = o(n^{\ep/2})$ so for large $n$, we have $n^{3\ep} < s < t_{max}s=m < n^{\ep/2}s \le n^2$.  For any fixed constant $C>0$, $j \in \{1,2\}$, and $n$ sufficiently large, as $k_j \le n^{1/3+\ep}$,  \[40Cs_j^2k_jn^{\ep} \le 40Cn^{1/3 + 8\ep} < n^{1/3 + 9\ep} < s.\]

As $x(t),y(t),e(t),\gamma(t)$ are nonnegative, it follows from the definitions that, for $j \in \{1,2\}$,\[\inf_{t \ge 0} \theta_j(t) + \frac{e_j(t)x_j(t)}{2} - \frac{\gamma_j(t)}{2} \ge \inf_{t \ge 0} \frac{1}{2} + \frac{\gamma(t)}{2} > \frac{1}{4}.\]

Next, we observe from the definitions of $x,x^{\pm},y,y^{\pm}$ and straightforward differentiation that we can bound, for $t \ge 0$, $|x'|$, $|y'|$, $|x^{\pm}|$, $|y^{\pm}|$, $|x''|$, and $|y''|$ above by a function of the form $H(t)=F(t)\es^{-8t^3}$, where $F$ is a polynomial of degree $5$ with nonnegative coefficients.  It is straightforward to see that both $\sup_{t \ge 0} H(t)$ and $\int_0^{\infty} H(t)$ are finite, so provided $C$ is greater than the larger of these two, $\sup_{t \ge 0} |x_j^{\pm}(t)|$, $\sup_{t \ge 0} |x_j'(t)|$, and $\int_{0}^{\infty} |x_j''(t)|\ dt$ are all less than $C$.

Finally, turning to the inequalities involving $h_j = (e_jx_j + \gamma_j)'$, it is
easy to see that $\sup_{t \ge 0} |\gamma'(t)|$ and $\int_0^{\infty} |\gamma''(t)|$ are bounded.  Calculations similar to those establishing lower bounds on $h_1,h_2$ easily yield that $|(e_jx_j)'|$ and $|(e_jx_j)''|$ are bounded above by a function of the form $F(t)\es^{P(t)}$, where $F$ is a polynomial of degree $5$ with nonnegative coefficients.  As $F(t)$ and $\es^{P(t)}$ are increasing, $\es^{P(t)} < (\es^{P(t)})'$, and $m/s=t_{max}$, it follows that \[\int_0^{t_{max}} F(t)\es^{P(t)}\ dt \le F(t_{max})\es^{P(t_{max})} = \sup_{0 \le t \le t_{max}} F(t)\es^{P(t)}.\]  As $t_{max}=\mu (\log n)^{1/3} = o(n^{\ep/10})$, say, $F(t_{max}) = o(n^{\ep/2})$, and as $\es^{P(t)} \le n^{\ep/2}$ on $[0,t_{max}]$, the bounds $\sup_{0 \le t \le m/s} |h_j(t)| < n^{\ep}$ and $\int_0^{m/s} |h_j'(t)|\ dt < n^{\ep}$ easily follow, and the proof of Lemma \ref{lem:trajectory} is complete.

\end{document}